\newcommand{\bea}{\begin{eqnarray}}
	\newcommand{\eea}{\end{eqnarray}}
\newcommand{\beaa}{\begin{eqnarray*}}
	\newcommand{\eeaa}{\end{eqnarray*}}
\newcommand{\g}{\mathfrak g}
\newcommand{\h}{\mathfrak h}
\newcommand{\n}{\mathfrak n}
\newcommand{\Z}{\mathbb Z}
\newcommand{\C}{\mathbb C}
\newcommand{\N}{\mathbb N}
\newcommand{\hg}{\hat \g}
\newtheorem{theorem}{Theorem}[section]
\newtheorem{main}{Main Theorem}
\newtheorem{lemma}[theorem]{Lemma}
\newtheorem{proposition}[theorem]{Proposition}
\newtheorem{corollary}[theorem]{Corollary}
\newtheorem{remark}[theorem]{Remark}
\newtheorem{rem}[theorem]{Remark}
\newtheorem{teo}[theorem]{Theorem}
\numberwithin{theorem}{section}
\numberwithin{equation}{section}
\begin{document}
	
	
	\title[]{   A method for describing the maximal ideal in universal affine vertex algebras at non-admissible levels}
	
	\author[]{Dra\v zen  Adamovi\' c}
	\address{Department of Mathematics, Faculty of Science \\
		University of Zagreb \\
		Bijeni\v cka 30 }
	\email{adamovic@math.hr}
	
	\author[]{Ozren Per\v se}
	\address{Department of Mathematics, Faculty of Science \\
		University of Zagreb \\
		Bijeni\v cka 30}
	\email{perse@math.hr}
	
	\author[]{Ivana Vukorepa}
	\address{Department of Mathematics, Faculty of Science \\
		University of Zagreb \\
		Bijeni\v cka 30}
	\email{vukorepa@math.hr}

	\begin{abstract}
	The problem of determining maximal ideals in universal affine vertex algebras is difficult for levels beyond admissible, since there are no simple character formulas which can be applied.  Here we  investigate when certain quotient  $\mathcal V$ of universal affine vertex algebra $V^k(\g)$ is simple. We present a  new method for proving simplicity of quotients of universal affine vertex algebras  in the case of affine  vertex algebra $L_{k_n}(\mathfrak{sl}_{2n})$ at level $k_n:=-\frac{2n+1}{2}$.   In that way we  describe the maximal ideal in $V^{k_n}(\mathfrak{sl}_{2n})$.
		For that purpose, we use the representation theory of minimal affine $W$-algebra $W^{min}_{k_{n+1}}(\mathfrak{sl}_{2n+2})$ developed in \cite{ACPV-22}.
		
		In particular,  we use the embedding $L_{k_n}(\mathfrak{sl}_{2n}) \subset W^{min}_{k_{n+1}}(\mathfrak{sl}_{2n+2})$ and fusion rules for $L_{k_n}(\mathfrak{sl}_{2n})$--modules.
		We  apply  this result in the cases $n=3,4$  and prove that a maximal ideal is generated by one singular vector of conformal weight $4$.  As a byproduct, we classify irreducible modules in the category $\mathcal{O}$ for the simple affine vertex algebra $L_{-7/2}(\mathfrak{sl}_{6})$.
	\end{abstract}
	\maketitle
	
	\section{Introduction}
	
		Denote by $V^k (\g)$  the universal  simple affine
	vertex algebra of level $k \in \C$ associated to the simple Lie algebra $\g$.  Let $L_k(\g)$ be its simple-quotient.
	The basic problem in the representation theory of affine vertex algebras is to classify irreducible $L_k(\g)$--modules. Specially natural problem is the classification of irreducible, highest weight  $L_k(\g)$--modules; which is referred as the classification in the category $\mathcal{O}$.
	It is well-known that for $k$ admissible, which is not positive integer,  $L_k(\g)$ is rational in the category $\mathcal{O}$, meaning that each module in the category $\mathcal O$ is semi-simple.
For levels beyond admissible, the classification of irreducible, highest weight  modules is known only in some very special cases (cf. \cite{AP-08, APV, AM-sheets}).
	
	This paper continues our previous research  \cite{ACPV-22, APV} on a family of affine vertex algebras of type $A$ at non-admissible, half-integer levels. In \cite{APV}, we completely determine the  irreducible representations in the category $\mathcal{O}$ for a vertex algebra $L_{-5/2} (\mathfrak{sl}_4)$. We used  singular vector of conformal weight $4$ in the universal affine vertex algebra $V^{-5/2} (\mathfrak{sl}_4)$, and proved that it generates the maximal ideal. In \cite{ACPV-22} we classify irreducible modules for $L_{k_n} (\mathfrak{sl}_{2n})$ in the Kazhdan-Lusztig category $KL_k(\mathfrak{sl}_{2n}) $ for  $k_n:=-\frac{2n+1}{2}$, for $n \geq 3$.	Our approaches uses  precise description  of minimal affine $W$--algebra $W_{k_n}( \mathfrak{sl}_{2n}, \theta)$, which we also obtained. But the methods of \cite{ACPV-22} were not sufficient for determination of maximal ideal in $V^{k_n} (\mathfrak{sl}_{2n})$ and classification of irreducible $L_{k_n} (\mathfrak{sl}_{2n})$--modules in the category $\mathcal{O}$. In the present paper we are focused on these problems.
	
	We also believe that the maximal ideal can be also described by using the fact that $k_n$ is a collapsing level for hook affine $W$--algebras (cf. \cite{AMP}). But the properties of QHR in the  hook cases are still not completely described.

	\subsection*{Main results of the paper} 
	Our main result is the criterion for the simplicity of a certain quotient of $V^{k_n} (\mathfrak{sl}_{2n})$, $k_n= - \frac{2n+1}{2}$.
	\begin{main}\label{main-intro} (cf. Theorem  \ref{main1})
		Let $k_n= - \frac{2n+1}{2}$. Assume that $\mathcal{V}$ is a quotient of $V^{k_{n+1}}(\mathfrak{sl}_{2n+2})$ such that:
		\begin{itemize}
			\item[(1)] The complete list of $\mathcal{V}$--modules in the category $KL_{k_{n+1}}$ is given by the set
			$$ \{  L_{k_{n+1}}(t \omega_1) ,  L_{k_{n+1}}(t \omega_{2n+1}) \ | \ t \in \Z_{\geq 0} \}.$$
			\item[(2)] The maximal ideal of $V^{k_n}(\mathfrak{sl}_{2n})$ is generated by a singular vector $\Omega_n$ of conformal weight~4.
		\end{itemize}
		Then $\mathcal{V} \cong L_{k_{n+1}}(\mathfrak{sl}_{2n+2})$.
	\end{main}
	
	 So our criterion can be  basically interpreted as follows:
	\begin{itemize}
	\item If certain quotient $\mathcal{V}$  of $V^{k_n} (\mathfrak{sl}_{2n})$ has the same irreducible modules in the Kazhdan-Lusztig category as  $L_{k_n} (\mathfrak{sl}_{2n})$ (which were previously classified in \cite{ACPV-22}), then  $\mathcal V \cong~L_{k_n} (\mathfrak{sl}_{2n})$.
	\end{itemize}

	 Let us give few comments on the proof of the main theorem.  First, we prove there is a non-zero homomorphism $L_{k_n}(\g^\natural) \to H_0(\mathcal{V})$, where $H_0(\cdot)$ denotes the minimal QHR functor. From \cite[Theorem 4.1]{ACPV-22} we have following:
	\begin{enumerate}
		\item Minimal $W$--algebra $W_{k_{n+1}}^{min}(\mathfrak{sl}_{2n+2})$  is a simple current extension of $L_{k_n}(\mathfrak{sl}_{2n}) \otimes \mathcal{H} \otimes \mathcal{M}$ where $\mathcal{H}$ is the rank
		one Heisenberg vertex algebra, and $\mathcal{M} = \mathcal{M}(2) $ the singlet vertex algebra for $c=-2$.
		\item Denote $U_i^{(n)} := L_{k_n}(i \omega_1)$, $U_{-i}^{(n)} := L_{k_n}(i \omega_{2n-1})$, for $i \in \Z_{\geq 0}$. The set $\{ U_i^{(n)} \ | \ i \in \Z \}$ provides a complete list of inequivalent irreducible $L_{k_n}(\mathfrak{sl}_{2n})$--modules in $KL_{k_n}$
		with fusion rules $U_i^{(n)} \times U_j^{(n)} = U_{i+j}^{(n)}$.
	\end{enumerate}
	
	So, since there is the embedding  $L_{k_n}(\mathfrak{sl}_{2n}) \subset W^{min}_{k_{n+1}}(\mathfrak{sl}_{2n+2})$, we have $\Omega_n = 0 $ in $\mathcal{W}_{k_{n+1}}$. If $\Omega_n$ is non-zero in $H_0(\mathcal{V})$, then there is a singular vector $v$ in $H_0(\mathcal{V})$ of conformal weight less or equal four. From classification of $\mathcal{V}$--modules in $KL_{k_{n+1}}$, it follows that $v$ is a QHR of a singular vector in $\mathcal{V}$ of $\mathfrak{sl}_{2n+2}$--weight $t \omega_1$ or $t \omega_{2n+1}$. There are a few possibilities, but all of them lead to a contradiction (cf. Lemma \ref{lemma-homo}).

	Next, we consider the commutant  $\mathcal{C}:=( L_{k_n}(\mathfrak{sl}_{2n}) \otimes \mathcal{F}^\ell, H_0(\mathcal{V}))$, where $\mathcal{F}^\ell$ is the Heisenberg VOA as in \cite{ACPV-22}, and decompose $H_0(\mathcal{V})$ as a $L_{k_n}(\mathfrak{sl}_{2n}) \otimes \mathcal{F}^\ell \otimes \mathcal{C}$--module. Then we use fusion rules for $L_{k_n}(\mathfrak{sl}_{2n})$ and Heisenberg VOA to finish the proof, see Proposition \ref{decomposition}.
	
	In Section \ref{sec-classif} we present an application of Main Theorem \ref{main-intro} to the case $V^{-7/2}(\mathfrak{sl}_6)$. First, we determine an explicit formula for the singular vector $v$ in $V^{-7/2}(\mathfrak{sl}_6)$ of conformal weight 4 (cf. Theorem \ref{sing-v-sl6}). This, together with Zhu's theory, enables us to classify irreducible modules in the category $\mathcal{O}$ for the associated quotient $\mathcal{V}=V^{-7/2}(\mathfrak{sl}_6) / \langle v \rangle$ (cf. Theorem \ref{prop-Zhu-O}). As a consequence, we obtain that irreducible $\mathcal{V}$--modules in the category $KL_{-7/2}$ are $\{ L_{-7/2}(t \omega_1) , L_{-7/2}(t \omega_5) \ | \ t \in \Z_{\geq 0} \}$, so $\mathcal{V}$ satisfies the assumption (1) of Main Theorem \ref{main-intro}. Furthermore, in \cite[Section 5]{APV} an explicit formula for the singular vector of conformal weight four in $V^{-5/2}(\mathfrak{sl}_4)$ is given and it is proved that it generates the maximal ideal. So, we can apply Main Theorem \ref{main-intro} and conclude that $\mathcal{V}$ is simple, i.e. $L_{-7/2}(\mathfrak{sl}_6)=\mathcal{V}$. As a consequence, we get that the set $\{ L_{-7/2}(\mu_i(t)) \ | \ i=1, \ldots , 96, \ t \in \C \}$ from Theorem \ref{prop-Zhu-O} provides a complete list of irreducible $L_{-7/2}(\mathfrak{sl}_6)$--modules in the category $\mathcal{O}$.

	\subsection*{Acknowledgements}
	The authors are partially supported by the Croatian Science Foundation under the project IP-2022-10-9006 and by the project “Implementation of cutting-edge research and its application as part of the Scientific Center of Excellence QuantiXLie“, PK.1.1.02, European Union, European Regional Development Fund.
	
	\section{Preliminaries}\label{sec-prelim}

	In this section we introduce the notation and recall some results on affine vertex algebras (cf. \cite{Bo}, \cite{FZ}, \cite{K-inf}, \cite{K98}). We also discuss minimal affine $\mathcal{W}$--algebras (cf. \cite{KW04}, \cite{AKMPP-18}, \cite{ACPV-22}, \cite{Ar05}).
	\subsection{Affine vertex algebras}
	We introduce the following notation:
	\begin{itemize}
		\item Let $\g $ be the simple Lie algebra with Cartan subalgebra $\h$ and triangular decomposition $\g = \frak n_{-} \oplus \frak h \oplus \frak n_{+}$. Fix the highest root $\theta$ of $\g$. Let $\langle \cdot , \cdot \rangle : \g \times \g \rightarrow \C$ be the Killing form, normalized by the condition $\langle \theta, \theta \rangle =2$.
		\item For $\mu\in \h^*$, denote by $V(\mu)$ the irreducible highest-weight $\g$--module with the highest weight $\mu$.
		\item Let us denote by $\alpha_1, \ldots, \alpha_l$ simple roots, by $h_1, \ldots, h_l$ simple co-roots ($h_i = \alpha^\vee_i$, for $i = 1, \ldots, l$), and by $\omega_1 , \ldots , \omega_l$	fundametal weights for $\g$ ($l = {\mbox{dim}} \h$).
		\item Let $\hg = \g \otimes \C[t,t^{-1}] \oplus \C K$ be the affine Lie algebra associated to $\g$.
		\item For $\mu \in \h$ and $k \in \C$, denote by $L_k(\mu)$ the irreducible highest weight $\hg$--module with highest weight $\hat{\mu}:=k \Lambda_0 + \mu \in \hat{\h}^*$.
		\item For $k$ non-critical, i.e., $k \neq -h^\vee$, where $h^\vee$ is the dual Coxeter number of $\g$, let $V^k(\g)$ be the universal affine vertex operator algebra of level $k$ associated to the simple Lie algebra $\g$. Let $L_k(\g)$ be the simple quotient of  $V^k(\g)$.
		\item For any quotient $V$ of $V^k(\g)$, we define the category $KL_k$ of  $V$--modules as in \cite{AKMPP-20}.
	\end{itemize}

	\subsection{Classification of irreducible modules in the category $\mathcal{O}$}\label{subsec-Zhu}

For a vertex algebra $V$, denote by $A(V)$ Zhu’s algebra associated to $V$ (cf. \cite{Zhu96}). Let  $v \in V^k(\g)$ be a singular vector. We denote by $\widetilde{L}_k(\g)=V^k(\g)/\langle v \rangle$ the associated quotient vertex algebra.  Let us recall the method for classification of irreducible $\widetilde{L}_{k}( \mathfrak{g})$--modules in the category $\mathcal{O}$ developed in \cite{A-94, AM}. Since it has been discused and used in several previous papers \cite{APV, AP-08, P1,  P2}, we recall it very briefly here. It is well known that $A(V^k(\g)) \cong \mathcal{U}(\g)$ and $A(\widetilde{L}_k(\g)) \cong \mathcal{U}(\g) / \langle v' \rangle$, where $v'$ is the image od $v \in V^k(\g)$ in $\mathcal{U}(\g)$ (see \cite{FZ} for details). Let $R$ be a $\mathcal{U}(\g)$--submodule of $\mathcal{U}(\g)$ generated by $v'$ under the adjoint action. We denote by $R_0$ its zero-weight space. For $\mu \in \h^*$, let $V(\mu)$ be an irreducible highest weight $\mathcal{U}(\g)$--module with the highest weight vector $v_\mu$. It is easily to see that for each $r \in R_0$ there exists the unique polynomial $p_r \in \mathcal{S}(\h)$ such that $r v_\mu = p_r(\mu) v_\mu$. Set $\mathcal{P}_0 = \{ p_r \in \mathcal{S}(\h) \ | \ r \in R_0 \}$. We have:

	\begin{proposition}\cite{A-94, AM} \label{koro-polinomi} There is a one-to-one correspondence between 
		\begin{enumerate} 
		\item irreducible $\widetilde{L}_{k}( \mathfrak{g})$--modules
			in the category $\mathcal{O}$ (for  $\hat{\mathfrak{g}}$); 

			\item irreducible $A(\widetilde{L}_{k}( \mathfrak{g}))$--modules
			in the category $\mathcal{O}$ (for $\g$)
			\item weights $\mu \in {\frak h}^{*}$ such that
			$p(\mu)=0$ for all $p \in {\mathcal P}_{0}$.
		\end{enumerate}
	\end{proposition}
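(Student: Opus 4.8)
The plan is to prove the two bijections $(1)\leftrightarrow(2)$ and $(2)\leftrightarrow(3)$ separately: the first is a direct application of Zhu's theory, while the second is an elementary reduction of the module condition to a finite system of polynomial equations, which is where the actual work lies.

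For $(1)\leftrightarrow(2)$ I would invoke Zhu's correspondence between $\Z_{\ge0}$--gradable irreducible $\widetilde L_k(\g)$--modules and irreducible $A(\widetilde L_k(\g))$--modules: a module $M$ is sent to its lowest conformal-weight space $M(0)$, on which the zero-mode action of $\widetilde L_k(\g)$ realizes the $A(\widetilde L_k(\g))$--action, and conversely an irreducible $A(\widetilde L_k(\g))$--module is inflated to a generalized Verma $\hat\g$--module whose unique irreducible quotient is taken. The one point to verify is that the category $\mathcal O$ condition for $\hat\g$ on the vertex algebra side matches the category $\mathcal O$ condition for $\g$ on the Zhu side: a module lies in $\mathcal O$ for $\hat\g$ exactly when its top space $M(0)$ is a category $\mathcal O$ module for $\g$, and since $A(\widetilde L_k(\g))\cong\mathcal U(\g)/\langle v'\rangle$ this matches $(2)$. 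As the irreducible objects of $\mathcal O$ for $\g$ are the $V(\mu)$, this identifies $(2)$ with the set of $V(\mu)$ that descend to $\mathcal U(\g)/\langle v'\rangle$.

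For $(2)\leftrightarrow(3)$ the task is to characterize the $\mu$ for which the two-sided ideal $\langle v'\rangle$ annihilates $V(\mu)$. I would run the following chain of equivalences: $\langle v'\rangle V(\mu)=0$ iff $v'V(\mu)=0$ (since $\mathrm{Ann}\,V(\mu)$ is a two-sided ideal, it contains $v'$ iff it contains $\langle v'\rangle$); iff $R\,V(\mu)=0$ (a two-sided ideal is stable under $\mathrm{ad}$, so $\mathrm{Ann}\,V(\mu)$ contains $v'$ iff it contains the whole $\mathrm{ad}$--module $R=\mathrm{ad}(\mathcal U(\g))v'$); iff $R\,v_\mu=0$ (the subspace $\{w:Rw=0\}$ is a $\g$--submodule, via the identity $r(xw)=x(rw)-(\mathrm{ad}(x)r)w$ together with the $\mathrm{ad}$--stability of $R$, hence equals $V(\mu)$ by irreducibility as soon as it contains $v_\mu$).

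The \emph{crux} is the final reduction $R\,v_\mu=0\iff R_0\,v_\mu=0$, discarding the nonzero-weight components of $R$; a priori the negative-weight parts could impose extra vanishing conditions on $\mu$. The forward implication is trivial. For the converse I would note that $R\,v_\mu$ is $\n_+$--stable, since for $e\in\n_+$ one has $e(rv_\mu)=(\mathrm{ad}(e)r)v_\mu+r(ev_\mu)=(\mathrm{ad}(e)r)v_\mu\in R\,v_\mu$ because $ev_\mu=0$. Thus $R\,v_\mu$ is a finite-dimensional $\n_+$--stable subspace of $V(\mu)$ (finite-dimensional because $\mathrm{ad}$ acts locally finitely on $\mathcal U(\g)$, so $R$ is finite-dimensional); if it were nonzero, a weight vector of maximal weight in it would be an $\n_+$--singular vector of $V(\mu)$, which by irreducibility can only be a multiple of $v_\mu$, forcing the weight-$\mu$ component of $R\,v_\mu$ to be nonzero. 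But that component is exactly $R_0\,v_\mu$, which vanishes by hypothesis; hence $R\,v_\mu=0$. Assembling the chain, $V(\mu)$ is an $A(\widetilde L_k(\g))$--module iff $r v_\mu=0$ for all $r\in R_0$, i.e. iff $p_r(\mu)=0$ for every $p_r\in\mathcal P_0$, which is precisely $(3)$. The main obstacle is exactly this last step, and the $\n_+$--stability-plus-irreducibility argument is what rules out the feared extra conditions coming from the lower weights.
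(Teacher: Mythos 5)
The paper itself gives no proof of this proposition---it is quoted from \cite{A-94, AM}---so your attempt can only be measured against the standard argument of those references, and it matches it: Zhu's bijection between irreducible $\Z_{\geq 0}$-gradable modules and irreducible $A(V)$-modules gives $(1)\leftrightarrow(2)$, and the reduction of the annihilation condition $\langle v'\rangle V(\mu)=0$ to $R_0\, v_\mu=0$, via $\mathrm{ad}$-invariance of the annihilator and the submodule $\{w : Rw=0\}$, gives $(2)\leftrightarrow(3)$. Your handling of the crux---that $R\,v_\mu$ is a finite-dimensional $\n_+$-stable subspace whose maximal-weight vectors would be singular, hence proportional to $v_\mu$, so that its weight-$\mu$ component $R_0\,v_\mu$ would be nonzero---is precisely the key step of the cited proof, and it is correct.
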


	\subsection*{Representation theory of $L_k(\g)$} 
	
	Denote by $L_k (\g)$ the simple affine
	vertex algebra of level $k \in \C$ associated to the simple Lie algebra $\g$. 
	It is well-known that for $k$ admissible, which is not positive integer,  $L_k(\g)$ is rational in the category $\mathcal{O}$, meaning that each module in the category $\mathcal O$ is semi-simple.
	
	If $k$ is not admissible, but $k + h^\vee$ is still positive rational number, one expects interesting representation theory of $L_k(\g)$.
	Recall than in the case $\g = \mathfrak{sl}_n$, level $k$ is admissible if and only if $ k + n = \frac{p}{q}$, with $p,q \in \N$, $(p,q) = 1$ and $p \geq n$. Then one can restricts to so-called almost admissible levels, i.e. levels of the form $k = -n + \frac{n-1}{q}$, with $q \in \N$, $(n-1, q) = 1$ \cite{ACPV-22}. In the case $k=-1$ it was proved that category $\mathcal{O}$ is not semisimple and there are infinitely many irreducibles, unlike the admissible case \cite{AP-08}. In the case $L_k(\mathfrak{sl}_{2n})$, for $k=-\frac{2n+1}{2}$, modules in the category $KL_k$ are classified and it is proved that $KL_k$ is semisimple, rigid braided tensor category \cite{ACPV-22}, but the category $\mathcal{O}$ has been studied only for $n=2$. Authors in \cite{APV} classified irreducible modules in $\mathcal{O}$ for $L_{-5/2}(\mathfrak{sl_4})$. They are parametrized as an union of 16 lines in $\C^3$ and indecomposable modules are constructed.

	\subsection{Minimal $\mathcal{W}$--algebras}
	\label{subsect-minimal}
	
	Let $\g = \mathfrak{sl}_{m+2}$. We assume $k \neq - h^\vee$. We choose root vectors $e_\theta, e_{-\theta} \in \g$ such that $[e_\theta , e_{-\theta}] = x \in \h, \ [x, e_{\pm \theta}] = \pm e_{\pm \theta}$. The eigenspace decomposition of $ad\ x$ defines a minimal $\frac{1}{2} \Z$--gradation $\g = \g \oplus \g_{-1/2} \oplus \g_0 \oplus \g_{1/2} \oplus \g_1$. Let $\g^\natural=\{ a \in \g_0 \ | \ \langle a,x \rangle =0  \}$. We have $\g^\natural = \mathfrak{sl}_m  \times \mathcal \C$.
	Let $W^{k}_{min}(\g)$ be the universal minimal $\mathcal{W}$--algebra associated to the triple $(\g, e_{-\theta},k)$, by applying the quantum Hamiltonian reduction functor $H_0$ to the $V^k(\g)$ \cite{KW04}. Let $W_k^{min}(\g)$ be the simple quotient of $W^{k}_{min}(\g)$.
	
	For the dominant integral weight
	$\lambda = \lambda_1 \omega_1 + \cdots+ \lambda_{m+1} \omega_{m+1}$, $\lambda_i \in {\Z}_{\ge 0}$, for $\g$, we define the dominant integral weight for $\mathfrak{sl_m}$ with
	$\bar \lambda = \lambda_2 \bar \omega_1 + \cdots + \lambda_{m} \bar \omega_{m-1}$.
	
	We recall (cf.  \cite{Ar05}) that  
	\begin{itemize}
	
	\item	$H_0$ is an exact functor from the category $KL_k$ to the category of
		$W^{k}_{min}(\g)$--modules.
	\end{itemize}
	 	
	\begin{proposition}\label{prop:min}\cite{KW04,Ar05, ACPV-22}  Assume that  $k \notin {\Z}_{\ge 0}$,  $\lambda$ is dominant integral weight for  $\mathfrak{sl}_{m+2}$ such that $L_k(\lambda)$  a  highest weight module in $KL_k$. Then $H_{0} ( L_k(\lambda))$ is an irreducible, highest weight module for $W^{k}_{min}(\mathfrak{sl}_{m+2})$ with a highest weight vector $\overline {v}_{\lambda}$ such that the highest weight with respect to $\g^\natural = \mathfrak{sl}_m  \times \mathcal \C$ is $(  \bar \lambda , \langle \lambda, \omega_1 - \omega_{m+1} \rangle )$. In particular we have:
		\begin{itemize}
			\item $J(0) \overline {v}_{\lambda} =     \langle \lambda, \omega_1 - \omega_{m+1} \rangle   \overline {v}_{\lambda}$. 
			\item $L(0)   \overline {v}_{\lambda} = \left( \frac{ \langle \lambda, \lambda + 2 \rho\rangle}{ 2( k + h^{\vee})} -  \tfrac{1}{2}  \langle  \lambda, \theta \rangle \right)    \overline {v}_{\lambda}, $ 
		\end{itemize}
		where $J$ is the Heisenberg field obtained from 
		$ J \equiv  \frac{1}{m+2} t(-1){\bf 1} $, $ t =  \mbox{diag} ( m,  \underbrace{-2, \dots, -2}_{m}, m ) \in~\g. $
	\end{proposition}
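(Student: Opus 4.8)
The plan is to realize $H_0$ as the minimal quantum Hamiltonian reduction of Kac--Wakimoto \cite{KW04}, so that $H_0(L_k(\lambda))$ is the zeroth BRST cohomology of the complex $L_k(\lambda) \otimes F^{ch} \otimes F^{ne}$, where $F^{ch}$ is the charged ghost system attached to $\g_{1/2} \oplus \g_1$ and $F^{ne}$ the neutral free fermions attached to $\g_{1/2}$, with differential built from $f = e_{-\theta}$. The structural part of the statement then follows from the homological properties of this functor: exactness of $H_0$ on $KL_k$ (recalled above from \cite{Ar05}) together with the vanishing $H_i(L_k(\lambda)) = 0$ for $i \neq 0$ and Arakawa's preservation of irreducibility on irreducible objects of $KL_k$ give that $H_0(L_k(\lambda))$ is an irreducible highest weight $W^k_{min}(\mathfrak{sl}_{m+2})$--module; the hypothesis $k \notin \Z_{\geq 0}$ guarantees non-vanishing.

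Next I would identify the highest weight vector $\overline{v}_\lambda$ as the cohomology class of $v_\lambda \otimes \mathbf{1}$, where $v_\lambda$ is the $\hg$--highest weight vector of $L_k(\lambda)$ and $\mathbf{1}$ is the joint vacuum of the ghost and fermion systems. Since the positive modes of $\g^\natural$, of the ghosts, and of the fermions annihilate this vector, its $\g^\natural = \mathfrak{sl}_m \times \C$--weight is read off from the zero-mode action alone. The $\mathfrak{sl}_m$--component is the restriction $\bar\lambda = \lambda_2 \bar\omega_1 + \cdots + \lambda_m \bar\omega_{m-1}$ of $\lambda$ to the Cartan of $\mathfrak{sl}_m$, while the Heisenberg $\C$--component is measured by $J(0)$.

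For the two explicit eigenvalues I would argue as follows. Since $J$ is the image of $\tfrac{1}{m+2}t(-1)\mathbf{1}$ up to fermionic corrections that involve only positive fermion modes and hence kill $\overline{v}_\lambda$, one gets $J(0)\overline{v}_\lambda = \tfrac{1}{m+2}\lambda(t)\,\overline{v}_\lambda$. A direct computation in the $\epsilon$--basis of $\mathfrak{sl}_{m+2}$, using $\omega_i = \epsilon_1 + \cdots + \epsilon_i$ and $\langle \epsilon_i, \epsilon_j \rangle = \delta_{ij} - \tfrac{1}{m+2}$, then shows $\tfrac{1}{m+2}\lambda(t) = \langle \lambda, \omega_1 - \omega_{m+1} \rangle$ (for instance both sides equal $\tfrac{m}{m+2}$ when $\lambda = \omega_1$, since $t = \mathrm{diag}(m, -2, \dots, -2, m)$ and $\omega_1 - \omega_{m+1} = \epsilon_1 + \epsilon_{m+2}$), yielding the stated charge. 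For $L(0)$, the Virasoro field of $W^k_{min}(\g)$ is the Sugawara vector shifted by $-\partial x$ plus ghost and fermion terms that annihilate $v_\lambda \otimes \mathbf{1}$, so $L(0)\overline{v}_\lambda = (L^{sug}(0) - x(0))v_\lambda$. Combining $L^{sug}(0)v_\lambda = \tfrac{\langle \lambda, \lambda + 2\rho \rangle}{2(k+h^\vee)}v_\lambda$ with $x(0)v_\lambda = \langle \lambda, x \rangle v_\lambda = \tfrac{1}{2}\langle \lambda, \theta \rangle v_\lambda$, where $x = \tfrac{1}{2}\theta^\vee$ is identified with $\tfrac12\theta$ under the normalization $\langle \theta, \theta \rangle = 2$, produces the claimed conformal weight.

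The main obstacle is the homological input of the first step, namely verifying that $H_i(L_k(\lambda)) = 0$ for $i \neq 0$ and that $H_0$ sends irreducible objects of $KL_k$ to irreducible modules; this is precisely where the hypotheses $L_k(\lambda) \in KL_k$ and $k \notin \Z_{\geq 0}$ are essential, and it is supplied by Arakawa's general vanishing and irreducibility theorems \cite{Ar05}. Once that is in place, the remaining work is the bookkeeping of the normal-ordering corrections in $J$ and in the conformal vector, together with the elementary $\epsilon$--basis identity relating $\lambda(t)$ to $\langle \lambda, \omega_1 - \omega_{m+1}\rangle$, all of which reduce to the observation that every correction term acts by positive modes and therefore vanishes on the top vector $v_\lambda \otimes \mathbf{1}$.
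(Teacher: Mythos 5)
Your proposal is correct and follows exactly the route the paper relies on: this proposition is stated in the paper without proof, as a citation of the Kac--Wakimoto BRST construction \cite{KW04} and Arakawa's exactness, vanishing, and irreducibility theorems \cite{Ar05}, and your argument is precisely the assembly of those results, with the non-vanishing coming from $k \notin \Z_{\geq 0}$ together with $\langle \lambda, \theta^\vee \rangle \in \Z_{\geq 0}$. Your eigenvalue computations (the identification $\tfrac{1}{m+2}\lambda(t) = \langle \lambda, \omega_1 - \omega_{m+1}\rangle$ and $L(0) = L^{sug}(0) - x(0)$ on the top vector, all correction terms acting by positive or half-integer modes) are also correct and consistent with the paper's later formula $h^{(t)} = \tfrac{t^2}{m+2} + \tfrac{t}{2}$.
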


	\subsection{The category $KL_{k}$ via minimal $W$--algebras} 
	Let $n \in \Z_{\ge 2}$, $k_n=-\frac{2n+1}{2}$. We introduce the following notation for the irreducible $L_{k_n}(\mathfrak{sl}_{2n})$--modules:
	$$ U_i^{(n)} = L_{k_n}(i \omega_1), \ U_{-i}^{(n)} = L_{k_n}(i \omega_{2n-1}), \ i \in \Z_{\ge 0}.$$
	We recall the main theorem from \cite{ACPV-22}:
	
	\begin{theorem}\cite[Theorem 4.1]{ACPV-22}For each $n \in \mathbb Z_{\geq 2}$ we have:
		\begin{itemize}
			\item[(1)]   the set  $\{ U_{i} ^{(n)}  \vert \ i \in {\Z} \}$ provides a complete list of inequivalent  irreducible modules in $KL_{k_n} (\mathfrak{sl}_{2n})$ with fusion rules
			$$ U_i ^{(n)} \times U_j ^{(n)} = U_{i+j}^{(n)}, \quad i, j \in {\Z}.$$
			\item[(2)]  $KL_{k_n} (\mathfrak{sl}_{2n})$ is a semi simple rigid braided tensor category.
			\item[(3)] There exists conformal embedding $L_{k_n} (\mathfrak{sl}_{2n}) \otimes \mathcal H \otimes \mathcal M(2) \hookrightarrow  W_{k_{n+1}}^{min}(\mathfrak{sl}_{2n+2})$.
			\item[(4)] $W_{k_{n+1}}^{min}(\mathfrak{sl}_{2n+2})$ is a simple current extension of $ L_{k_n} (\mathfrak{sl}_{2n}) \otimes \mathcal H \otimes \mathcal M(2)$ and we have the following decomposition:
			$$W_{k_{n+1}}^{min}(\mathfrak{sl}_{2n+2}) = \bigoplus_{i \in {\Z}}  U_{i} ^{(n)} \otimes \mathcal F^\ell_{i} \otimes \mathcal M_i$$
			where $\ell = - \frac{m}{m+2}$.
		\end{itemize}
	\end{theorem}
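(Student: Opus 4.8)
The plan is to prove the four assertions in the order (3) $\to$ (4) $\to$ (1) $\to$ (2), since the conformal embedding underlies the decomposition and the decomposition drives both the module classification and the tensor-categorical statements.

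First I would pin down the candidate subalgebra inside $W_{k_{n+1}}^{min}(\mathfrak{sl}_{2n+2})$ and prove the conformal embedding (3). For the minimal gradation of $\mathfrak{sl}_{2n+2}$ one has $\g^\natural = \mathfrak{sl}_{2n} \times \C$, and the Kac--Wakimoto structure of minimal reduction realizes the affine $\mathfrak{sl}_{2n}$ as a subalgebra at the shifted level $k_{n+1}+1 = k_n$, while the abelian factor is generated by the Heisenberg field $J$ of Proposition \ref{prop:min}, giving $\mathcal{H}$. To prove the embedding is conformal I would match central charges: a direct computation with the Kac--Wakimoto formula gives $c\bigl(W_{k_{n+1}}^{min}(\mathfrak{sl}_{2n+2})\bigr) = -(2n+1)^2 - 1$, whereas $c\bigl(L_{k_n}(\mathfrak{sl}_{2n})\bigr) = -(2n+1)^2$, $c(\mathcal{H}) = 1$ and $c(\mathcal{M}(2)) = -2$, so the three factors reproduce exactly the central charge of the $W$--algebra. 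Together with the identification of the affine and Heisenberg subalgebras this forces the coset Virasoro to be trivial, yielding the conformal embedding; the singlet $\mathcal{M}(2)$ is then recognized as the subalgebra obtained from the weight-$\tfrac{3}{2}$ generators $G^{\pm}$ after decoupling the Heisenberg direction $J$.

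Next, for the decomposition (4), I would grade $W_{k_{n+1}}^{min}(\mathfrak{sl}_{2n+2})$ by the $J(0)$-eigenvalue (Heisenberg charge), so that each graded piece becomes a module for $L_{k_n}(\mathfrak{sl}_{2n}) \otimes \mathcal{H} \otimes \mathcal{M}(2)$. Using the highest-weight data of Proposition \ref{prop:min} together with a branching/character analysis I would identify the charge-$i$ component with $U_i^{(n)} \otimes \mathcal{F}^\ell_i \otimes \mathcal{M}_i$, where $\ell = -\tfrac{2n}{2n+2}$, and observe that the Fock modules $\mathcal{F}^\ell_i$ are simple currents for $\mathcal{H}$ while the $U_i^{(n)}$ are the matching simple currents for $L_{k_n}(\mathfrak{sl}_{2n})$; this exhibits $W_{k_{n+1}}^{min}(\mathfrak{sl}_{2n+2})$ as a $\Z$--graded simple-current extension. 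Proving that precisely these summands occur, each with multiplicity one and nothing more, is the computational heart of the argument and, I expect, the main obstacle: it requires controlling the full (super)character of the simple minimal $W$--algebra at level $k_{n+1}$ and matching it term by term against the tensor-product characters, which is where the special arithmetic of $k_{n+1} = -\tfrac{2n+3}{2}$ must be exploited.

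Granting the decomposition, the classification and fusion rules (1) follow from the theory of simple-current extensions: every irreducible $L_{k_n}(\mathfrak{sl}_{2n})$--module in $KL_{k_n}$ arises as a factor in the restriction of an irreducible $W_{k_{n+1}}^{min}(\mathfrak{sl}_{2n+2})$--module, so the classification of the latter---obtained through the exact reduction functor $H_0$ and Zhu-algebra methods as in Proposition \ref{koro-polinomi}---transfers to give exactly $\{U_i^{(n)} \mid i \in \Z\}$, and the additivity of the Heisenberg charge yields $U_i^{(n)} \times U_j^{(n)} = U_{i+j}^{(n)}$. Finally, for (2), the braided (vertex) tensor category structure on $KL_{k_n}$ is supplied by the Huang--Lepowsky--Zhang and Kazhdan--Lusztig framework; rigidity is then automatic because every simple object is invertible, since $U_i^{(n)} \times U_{-i}^{(n)} = U_0^{(n)} = L_{k_n}(\mathfrak{sl}_{2n})$ is the unit, so each $U_i^{(n)}$ has dual $U_{-i}^{(n)}$, and semisimplicity is inherited from the semisimplicity of the $W$--algebra module category through the simple-current correspondence, the vanishing of the relevant $\mathrm{Ext}^1$ groups being controlled by conformal-weight considerations.
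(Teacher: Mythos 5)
First, a framing point: the paper you are reading does not prove this statement at all --- it is quoted verbatim from \cite[Theorem 4.1]{ACPV-22} --- so the only meaningful comparison is with the proof given there. Your reconstruction gets several local facts right: the level shift $k_{n+1}+1=k_n$ for the affine $\g^\natural$ subalgebra of the minimal reduction, and the central charge bookkeeping ($c(W^{min}_{k_{n+1}}(\mathfrak{sl}_{2n+2}))=-4n^2-4n-2=-(2n+1)^2+1-2$). But the global architecture is wrong in a way that cannot be patched. Your plan (3) $\to$ (4) $\to$ (1) $\to$ (2) makes the decomposition of the \emph{simple} $W$-algebra the first substantive step, to be proved by ``controlling the full (super)character of the simple minimal $W$-algebra at level $k_{n+1}$ and matching it term by term.'' No such character is available: $k_{n+1}$ is not admissible, and the absence of character formulas at such levels is precisely the difficulty this whole program is designed to circumvent (the present paper says so in its abstract). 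In \cite{ACPV-22} the logical order is the reverse of yours and the proof is an induction on $n$: the representation theory of $L_{k_n}(\mathfrak{sl}_{2n})$ in $KL_{k_n}$ --- semisimplicity, the simple-current property of the $U_i^{(n)}$ and their fusion rules, i.e.\ items (1)--(2) at rank $n$, with base case $\mathfrak{sl}_4$ at level $-5/2$ from \cite{APV} --- is an \emph{input} to identifying the coset $\mathrm{Com}\bigl(L_{k_n}(\mathfrak{sl}_{2n})\otimes\mathcal H,\, W^{min}_{k_{n+1}}(\mathfrak{sl}_{2n+2})\bigr)$ with the singlet $\mathcal M(2)$ and to the decomposition (4), via the theory of vertex algebra (simple-current) extensions; items (1)--(2) at the next rank are then extracted from (4) using the exact QHR functor $H_0$ on $KL_{k_{n+1}}(\mathfrak{sl}_{2n+2})$. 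Deducing (1) from (4) for the same $n$, as you propose, is circular without this induction: your claim that every irreducible module in $KL_{k_n}$ ``arises as a factor in the restriction of an irreducible $W$-module'' is exactly the completeness statement that needs proof, and nothing in simple-current formalism supplies it.

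Two further steps fail as written. Conformality of the embedding in (3) does not follow from matching central charges --- that equality is necessary, not sufficient --- and your phrase ``forces the coset Virasoro to be trivial'' is off the mark: the coset of $L_{k_n}(\mathfrak{sl}_{2n})\otimes\mathcal H$ is the nontrivial vertex algebra $\mathcal M(2)$, and conformality means $\omega_W=\omega_{\mathfrak{sl}_{2n}}+\omega_{\mathcal H}+\omega_{\mathcal M(2)}$, which requires actually identifying the commutant (e.g.\ locating the weight-$3$ singlet generator among bilinears in the $G^\pm$), not a central-charge count. You also tacitly assume that the image of $V^{k_n}(\mathfrak{sl}_{2n})$ inside the simple $W$-algebra is already the simple quotient $L_{k_n}(\mathfrak{sl}_{2n})$ --- again part of what must be proved (and it is what makes $\Omega_n\equiv 0$ in $\mathcal W_{k_{n+1}}$, the fact the present paper's Lemma \ref{lemma-homo} relies on). Finally, in (2), ``semisimplicity is inherited from the semisimplicity of the $W$-algebra module category'' presupposes the latter, which is no easier than the former; in \cite{ACPV-22} the braided tensor structure on $KL_{k_n}$ and the relevant $\mathrm{Ext}$-vanishing are established within the same induction, with rigidity then following from invertibility of the simple currents --- the one place where your sketch and the actual argument genuinely coincide.
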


	\section{Maximal ideal in $V^{k}(\mathfrak{sl}_{2n})$ at level $k=-\frac{2n+1}{2}$}\label{section-main}
	As in  \cite{ACPV-22}, for $n \in \Z_{\ge 2}$, $m=2n$, we use the following notation
\begin{itemize}
	\item $k_{n}= -\frac{2n+1}{2}$, $\g_{n} = \mathfrak{sl}_{2n}$,
	\item $\mathcal W^{k_n} = W^{k_n}_{min} (\g_n)$, $\mathcal W_{k_n} = W_{k_n} ^{min} (\g_n)$.
\end{itemize} 
	
	\begin{theorem} \label{main1} 
		Assume that $\mathcal V$ is any quotient of $V^{k_{n+1}}(\g_{n+1})$ with the following properties:
		\begin{itemize}
			\item[(1)] The set
			$$\{ L_{k_{n+1}} (t \omega_1), L_{k_{n+1}} (t \omega_{m+1}) \ \vert \ t \in {\Z}_{\ge 0} \} $$
			provides a complete list of irreducible $\mathcal V$--modules in  the category $KL_{k_{n+1}}$.
			\item[(2)]  The maximal ideal  of  $V^{k_n} (\g_n)$ is generated by a singular vector $\Omega_n$ of conformal weight~$4$.
		\end{itemize}
		Then $\mathcal V \cong L_{k_{n+1}}(\g_{n+1})$.
	\end{theorem}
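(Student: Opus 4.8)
The plan is to transport the problem to the minimal $\mathcal W$--algebra side, where $\mathcal W_{k_{n+1}}$ is completely understood through \cite[Theorem 4.1]{ACPV-22}, and then to pull the conclusion back to $\mathcal V$ using the exactness of $H_0$. Since $\mathcal V$ is a nonzero quotient of $V^{k_{n+1}}(\g_{n+1})$ and $H_0$ is exact on $KL_{k_{n+1}}$, the image $H_0(\mathcal V)$ is a vertex algebra quotient of $\mathcal W^{k_{n+1}}=H_0(V^{k_{n+1}}(\g_{n+1}))$ which surjects onto the simple algebra $\mathcal W_{k_{n+1}}=H_0(L_{k_{n+1}}(\g_{n+1}))$; in particular $H_0(\mathcal V)\neq 0$. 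The affine subalgebra attached to $\g^\natural=\mathfrak{sl}_{2n}\times\C$ inside $\mathcal W^{k_{n+1}}$ provides a nonzero homomorphism $V^{k_n}(\g^\natural)\to H_0(\mathcal V)$, with the $\mathfrak{sl}_{2n}$--factor at level $k_n$, as recorded by the conformal embedding of \cite[Theorem 4.1]{ACPV-22}.

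The first and crucial step is to show that this homomorphism factors through $L_{k_n}(\g^\natural)$, which by hypothesis (2) is equivalent to the vanishing of the image of $\Omega_n$ in $H_0(\mathcal V)$. Since $L_{k_n}(\mathfrak{sl}_{2n})$ embeds into the simple algebra $\mathcal W_{k_{n+1}}$, the image of $\Omega_n$ is already zero there, so it lies in the kernel $N$ of $H_0(\mathcal V)\twoheadrightarrow\mathcal W_{k_{n+1}}$. If $\Omega_n$ were nonzero in $H_0(\mathcal V)$ then $N\neq 0$, and a vector of minimal conformal weight in $N$ would be a singular vector $v$ for $\mathcal W^{k_{n+1}}$ of conformal weight at most $4$ (the conformal weight of $\Omega_n$). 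By exactness we have $N=H_0(\widetilde N)$, where $\widetilde N$ is the maximal ideal of $\mathcal V$; decomposing $\widetilde N\in KL_{k_{n+1}}$ and applying Proposition \ref{prop:min}, the vector $v$ is the reduction $H_0(w)$ of a $\hg_{n+1}$--singular vector $w\in\widetilde N$ whose weight, by hypothesis (1), is $t\omega_1$ or $t\omega_{m+1}$ for some $t\ge 1$. The eigenvalue formulas of Proposition \ref{prop:min} (with $k_{n+1}+h^\vee=\tfrac{2n+1}{2}$) then fix the conformal weight of $v$ as $\tfrac{t^2}{2n+2}+\tfrac{t}{2}$, together with its nonzero $J(0)$--charge and its $\g^\natural$--weight; the bound $\tfrac{t^2}{2n+2}+\tfrac{t}{2}\le 4$ leaves only finitely many $t$, and I would exclude each of them by matching these invariants against the module classification of hypothesis (1) and the known content of $\mathcal W_{k_{n+1}}$. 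I expect this finite case analysis --- the content of Lemma \ref{lemma-homo} --- to be the main obstacle, as it is precisely where the arithmetic of the level and of the minimal gradation must rule out every surviving low--weight candidate. Granting it, $\Omega_n\mapsto 0$, and the resulting nonzero homomorphism $L_{k_n}(\g^\natural)\to H_0(\mathcal V)$ is injective because its source is simple, exhibiting a conformal embedding $L_{k_n}(\mathfrak{sl}_{2n})\otimes\mathcal F^\ell\hookrightarrow H_0(\mathcal V)$, where $\mathcal F^\ell$ is the rank--one Heisenberg subalgebra generated by $J$.

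Next I would determine $H_0(\mathcal V)$ outright. Put $\mathcal C=\mathrm{Com}\big(L_{k_n}(\mathfrak{sl}_{2n})\otimes\mathcal F^\ell,\,H_0(\mathcal V)\big)$ and decompose $H_0(\mathcal V)$ as a module over $L_{k_n}(\mathfrak{sl}_{2n})\otimes\mathcal F^\ell\otimes\mathcal C$. Each joint isotypic component has the form $U_i^{(n)}\otimes\mathcal F^\ell_j\otimes(\text{multiplicity space})$, where the $L_{k_n}(\mathfrak{sl}_{2n})$--factors lie in $KL_{k_n}$ and hence, by \cite[Theorem 4.1]{ACPV-22}, are the simple currents $U_i^{(n)}$ obeying $U_i^{(n)}\times U_j^{(n)}=U_{i+j}^{(n)}$. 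Comparing this with the simple--current decomposition $\mathcal W_{k_{n+1}}=\bigoplus_{i\in\Z}U_i^{(n)}\otimes\mathcal F^\ell_i\otimes\mathcal M_i$ and using the additive fusion of the Heisenberg Fock modules to lock the charges, I would conclude that the surjection $H_0(\mathcal V)\twoheadrightarrow\mathcal W_{k_{n+1}}$ has no room for extra isotypic components, so that $N=0$ and $H_0(\mathcal V)\cong\mathcal W_{k_{n+1}}$ is simple. This is the content of Proposition \ref{decomposition}.

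Finally I would descend to $\mathcal V$ itself. The isomorphism $H_0(\mathcal V)\cong\mathcal W_{k_{n+1}}$ is realized by the natural surjection, so its kernel $H_0(\widetilde N)=N$ vanishes. If $\widetilde N\neq 0$, then, being an ideal in $KL_{k_{n+1}}$, it has an irreducible subquotient $L_{k_{n+1}}(\mu)$ with $\mu\in\{t\omega_1,t\omega_{m+1}\}$ and $t\ge 1$; by exactness $H_0(L_{k_{n+1}}(\mu))$ would be a subquotient of $H_0(\widetilde N)=0$, contradicting its nonvanishing and irreducibility furnished by Proposition \ref{prop:min}. Hence $\widetilde N=0$ and $\mathcal V\cong L_{k_{n+1}}(\g_{n+1})$.
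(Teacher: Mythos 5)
Your proposal follows the paper's architecture step for step: the same key lemma (vanishing of $\Omega_n$ in $H_0(\mathcal V)$, yielding a nonzero map from $L_{k_n}(\g^\natural)$), the same commutant decomposition with respect to $L_{k_n}(\g_n)\otimes\mathcal F^\ell\otimes\mathcal C$, and a final descent to $\mathcal V$ which, though you phrase it via exactness and the nonvanishing of $H_0$ on irreducibles rather than via the paper's direct singular-vector argument, is an equivalent way to finish. The problem is that at the step you yourself flag as ``the main obstacle'' --- the finite case analysis of Lemma \ref{lemma-homo} --- you stop at a declaration of intent (``I would exclude each of them by matching these invariants''), and the invariant that actually does the work is never identified. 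The bound $\tfrac{t^2}{2n+2}+\tfrac{t}{2}\le 4$ by itself leaves candidates $t=1,\dots,7$ alive for every $n$ (already $t=1$ gives conformal weight less than $1$), and there is no generic ``matching'' argument that kills them. The paper's mechanism is concrete: $H_0(\mathcal V)$ is a quotient of $\mathcal W^{k_{n+1}}$, on which $J(0)$ has integer spectrum, while the QHR of a singular vector of weight $t\omega_1$ or $t\omega_{m+1}$ carries $J(0)$--eigenvalue $\tfrac{n}{n+1}t$; integrality forces $t=q(n+1)$, hence $h^{(t)}=\tfrac{q(q+1)}{2}(n+1)\ge n+1$, and only $(q,n)=(1,2)$ (weight $3$) and $(q,n)=(1,3)$ (weight $4$) survive. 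These two are then excluded by structural arguments inside $\mathcal W^{k_{n+1}}$: for $n=3$ a vector of $J(0)$--charge $3$ must involve three $G^{+}$--generators and so has conformal weight at least $\tfrac{9}{2}>4$; for $n=2$ the candidate lies in $\mathrm{span}_{\C}\{:G^{+}_iG^{+}_j:\}$ and would need trivial $\mathfrak{sl}_4$--weight (since $\bar\lambda=0$ by Proposition \ref{prop:min}), which is impossible because $V(\omega_1)\otimes V(\omega_1)=V(2\omega_1)\oplus V(\omega_2)$ contains no trivial summand. None of this arithmetic or structure appears in your write-up, so the central lemma remains unproved.

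A secondary, smaller gap of the same nature occurs in your middle step: the claim that the charge-locking leaves ``no room for extra isotypic components,'' so that $N=0$, is asserted rather than argued. The paper makes this precise (proof of Theorem \ref{main1} from Proposition \ref{decomposition}): a nonzero kernel would contain a $\g^\natural$--singular vector of weight $(0,b)$ with $b\ne 0$, whereas in the decomposition $H_0(\mathcal V)=\sum_{i}U_i^{(n)}\otimes\mathcal F^\ell_i\otimes\mathcal C_i$ the trivial $\g_n$--type is tied to Heisenberg charge $0$, forcing $b=0$. Your mechanism is the right one, but as written both of the places where the proof requires actual computation are deferred, so the proposal is a correct road map of the paper's proof rather than a proof.
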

	

	
	Recall that for dominant integral $\mathfrak{sl}_{m+2}$--weight $\lambda= t \omega_1$ or $\lambda = t \omega_{m+1}$ we have $\bar \lambda = 0$. From Proposition \ref{prop:min} it follows that  $H_0( L_{k_{n+1} } (t \omega_1) )$   and  $H_0( L_{k_{n+1}} (t \omega_{m+1}) )$ are irreducible $\mathcal W^{k_{n+1}} $--modules  of   $\g^{\natural} =~\mathfrak{sl}_{m} \times~{\C}$ highest weight $(0, \frac{m}{m+2} t)$ and conformal weight
$h^{(t)} = \tfrac{t^2}{m+2}   + \tfrac{t}{2}  $.
	
	Assume that $(1)$ and $(2)$ from Theorem \ref{main1} hold.
	
	\begin{lemma}\label{lemma-homo}
		There is a non-zero homomorphism
		$ L_{k_{n}} (\g ^{\natural} ) \rightarrow H_0( \mathcal V)$. 
	\end{lemma}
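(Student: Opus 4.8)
The plan is to produce the homomorphism by exhibiting a suitable highest-weight (singular) vector inside $H_0(\mathcal V)$ whose $\g^\natural$-weight and conformal weight match those of the vacuum of $L_{k_n}(\g^\natural)$, and then to upgrade the resulting nonzero map of highest-weight data into a genuine vertex-algebra (or $\g^\natural$-module) homomorphism. First I would set up the target: since $\mathcal V$ is a quotient of $V^{k_{n+1}}(\g_{n+1})$ and $H_0$ is the minimal QHR functor, applying $H_0$ gives a quotient $H_0(\mathcal V)$ of $\mathcal W^{k_{n+1}} = W^{k_{n+1}}_{min}(\g_{n+1})$. By the exactness of $H_0$ on $KL_k$ and Proposition \ref{prop:min}, the vacuum-type generator survives with $\g^\natural = \mathfrak{sl}_m \times \C$ highest weight $(0,0)$, matching the generator of $L_{k_n}(\g^\natural)$. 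The natural candidate for the map is the one induced from the conformal embedding $L_{k_n}(\g_n)\otimes\mathcal H\otimes\mathcal M(2)\hookrightarrow \mathcal W_{k_{n+1}}$ of \cite[Theorem 4.1]{ACPV-22}, restricted to the affine subalgebra $\g^\natural$; the content of the lemma is that this candidate remains \emph{nonzero} after passing to the (possibly nonsimple) quotient $H_0(\mathcal V)$.

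The key step is a contradiction argument controlling singular vectors, exactly as flagged in the paper's proof sketch. Suppose the map were zero; equivalently, suppose the defining relation of $L_{k_n}(\g^\natural)$ fails in $H_0(\mathcal V)$. Concretely, by assumption (2) the maximal ideal of $V^{k_n}(\g_n)$ is generated by the singular vector $\Omega_n$ of conformal weight $4$, and under the embedding into $\mathcal W_{k_{n+1}}$ one has $\Omega_n = 0$ in $\mathcal W_{k_{n+1}}$. If the homomorphism is zero, then $\Omega_n$ must be \emph{nonzero} in $H_0(\mathcal V)$, which forces the existence of a singular vector $v$ in $H_0(\mathcal V)$ of conformal weight $\le 4$. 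Here I would invoke assumption (1): the complete list of $\mathcal V$-modules in $KL_{k_{n+1}}$ consists of the $L_{k_{n+1}}(t\omega_1)$ and $L_{k_{n+1}}(t\omega_{m+1})$, so any singular vector of $H_0(\mathcal V)$ is the QHR image of a singular vector in $\mathcal V$ of $\g_{n+1}$-weight $t\omega_1$ or $t\omega_{m+1}$. I would then list the finitely many admissible weights $t$ compatible with the conformal-weight bound $h^{(t)} = \tfrac{t^2}{m+2}+\tfrac t2 \le 4$, using the conformal weight formula from the discussion preceding the lemma, and rule out each case.

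The main obstacle will be this case analysis: each surviving weight $t$ (a short finite list for the relevant $m$) must be shown to produce a contradiction, presumably because the corresponding conformal and $J(0)$-eigenvalues computed via Proposition \ref{prop:min} are incompatible with $v$ being a genuine singular vector of weight $\le 4$ sitting inside $H_0(\mathcal V)$, or because the existence of such $v$ would contradict the classification in assumption (1) together with the known structure of $\mathcal W^{k_{n+1}}$-modules. The delicate point is bookkeeping the two gradings simultaneously — the Heisenberg charge $J(0)$ and the Virasoro grading $L(0)$ — and confirming that no genuine singular vector other than the expected one can appear at or below conformal weight $4$. Once every case is eliminated, $\Omega_n = 0$ in $H_0(\mathcal V)$, so the induced map $L_{k_n}(\g^\natural)\to H_0(\mathcal V)$ does not kill the vacuum and is therefore nonzero, completing the proof.
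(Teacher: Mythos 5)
Your setup follows the paper's own strategy: assume $\Omega_n \neq 0$ in $H_0(\mathcal V)$, use $\Omega_n \equiv 0$ in $\mathcal W_{k_{n+1}}$ together with exactness of $H_0$ and assumption (1) to produce a singular vector in $H_0(\mathcal V)$ of conformal weight $\le 4$ which is the QHR of a singular vector in $\mathcal V$ of weight $t\omega_1$ or $t\omega_{m+1}$, and then aim for a contradiction. One slip in the setup: the candidate map is not obtained by ``passing to the quotient $H_0(\mathcal V)$'' from the conformal embedding of \cite{ACPV-22} --- that embedding lands in the \emph{simple} quotient $\mathcal W_{k_{n+1}}$, which is a quotient of $H_0(\mathcal V)$, not the other way around. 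The correct candidate is the composite $V^{k_n}(\g_n)\subset \mathcal W^{k_{n+1}} \twoheadrightarrow H_0(\mathcal V)$, and the issue is whether it factors through $L_{k_n}(\g_n)$; once it factors, the induced map is automatically non-zero because it preserves the vacuum, so the real dichotomy is ``exists vs.\ does not exist,'' not ``zero vs.\ non-zero.''

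The genuine gap is that the case analysis --- the mathematical core of the lemma --- is never carried out; you only say that the finitely many $t$ with $h^{(t)}\le 4$ must ``presumably'' lead to contradictions. Two ideas are missing. First, the bound $h^{(t)}\le 4$ alone does not leave a tractable list: for example $t=1$ gives $h^{(1)} = \tfrac{1}{2n+2}+\tfrac12 < 1$ for every $n$, so conformal weight can never exclude it. The paper's key observation is that $J(0)$ acts on $H_0(\mathcal V)$ with \emph{integral} eigenvalues, while the QHR of a singular vector of weight $t\omega_1$ or $t\omega_{m+1}$ has $J(0)$-eigenvalue $\pm\tfrac{n}{n+1}t$; integrality forces $t = q(n+1)$ with $q\in\Z_{\ge 0}$, whence $h^{(t)} = (q^2+q)\tfrac{n+1}{2}\ge n+1$, and the constraint $h^{(t)}\le 4$ then leaves only $(q,n)=(1,2)$ and $(q,n)=(1,3)$. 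Second, these two surviving cases are eliminated by structural arguments inside $\mathcal W^{k_{n+1}}$ that you do not supply: for $(q,n)=(1,3)$ the putative singular vector has $J(0)$-weight $3$, but any product of three $G$-generators has conformal weight $>4$; for $(q,n)=(1,2)$ the vector would have to lie in $W=\mathrm{span}_{\C}\{:G^{+}_{i}G^{+}_{j}: \}$ and have $\mathfrak{sl}_m$-weight $0$ (since $\bar\lambda = 0$ for $\lambda=t\omega_1$ or $t\omega_{m+1}$), which is impossible because $V(\omega_1)\otimes V(\omega_1) = V(2\omega_1)\oplus V(\omega_2)$ contains no trivial summand. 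Without these steps the contradiction is asserted rather than proved, so the proposal is an outline of the paper's argument with its decisive computations left blank.
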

	\begin{proof}	
		Consider $V^{k_n}(\g_n)$--singular vector $\Omega_n$ as element of $V^{k_n} (\g_n) \subset  \mathcal W^{k_{n+1}}$.  By \cite{ACPV-22}  we know that $\Omega_n \equiv 0$ in $\mathcal W_{k_{n+1}}$. 
		Assume that $\Omega_n $ is non-zero in $H_0(\mathcal V)$.
		By the property (1) of Theorem \ref{main1}, we have that the maximal ideal  $\mathcal I$ of $\mathcal V$, if it is non-zero,  is generated  by certain singular vectors of $\g_{n+1}$--highest weights $t \omega_1$ or $t \omega_{m+1}$ for certain $t \in {\Z}_{\ge 0}$.
		Since QHR functor $H_0$ is exact, we have that $H_0(\mathcal V / \mathcal I ) =\mathcal W_{k_{n+1}}$.
		Therefore, $\Omega_n  \in H_0(\mathcal I)$. We conclude that there is a singular vector in $H_0(\mathcal V)$ of conformal weight which is  less or equal to four, which is a QHR of a singular vector in $\mathcal V$  of the weight   $t \omega_1$ or $t \omega_{m+1}$ for certain $t \in {\Z}_{\ge 0}$.
		
		Consider the generator $J$ of $\mathcal W^{k_{n+1}}$. Then $J(0)$ acts semi-simply on  $H_0(\mathcal V)$ with integral eigenvalues.
		We need to find all possible $t \in {\Z}_{\ge 0}$ such that the QHR of a singular vector with weight  $t \omega_1$ or $t \omega_{m+1}$ has  integer conformal weight less or equal to four in $H_0(\mathcal V)$. Moreover, using action of $J(0)$ we conclude that 
		$\frac{m}{m+2} t = \frac{n}{n+1} t \in {\Z}$.  We conclude that $t = q (n+1)$ for certain $q \in {\Z}_{\ge 0}$ and $J(0)$ acts  on this singular vector as  $q n \mbox{Id}$.  Then
		$$h^{( t)} = (q^2 + q) \frac{n+1}{2} \ge (n+1)$$
		Now the condition that $\vert  h^{( t)} \vert \le 4$, gives that the only possibilities  are
		\begin{itemize}
			\item[(i)]  $q=1$, $n =2$, and the corresponding conformal weight is $h^{(3)} = 3$.
			\item[(ii)] $q =1$, $n=3$, and   the corresponding conformal weight is $h^{(4)} = 4$.
		\end{itemize}
		The case (ii) is not possible, since the $J(0)$--weight of the singular vector  is $3$ and each product of three $G$--generators in  the basis must have conformal weight $>4$. We conclude that 
		the only solution is $n=2$ and  a singular vector of conformal weight $3$ and $J(0)$--weight $2$,  so
		such vector must belong to  the vector space $ W= \mbox{span}_{\C} \{ :G^{+}_{i} G^{+} _j: \ \vert \ i, j =1, \dots, m\}$.
		  From Proposition \ref{prop:min} follows that for $\lambda= t \omega_1$ or $\lambda= t \omega_{m+1}$ we have $\bar \lambda = 0$, so its $\mathfrak{sl}_{m}$--highest weight must be $0$. 
		
		But any $\mathfrak{sl}_{m}$--singular vectors in $ W$ should give a component in the tensor product of $V(\omega_1) \otimes V(\omega_1)$. But since 
		$V(\omega_1) \otimes V(\omega_1) = V(2 \omega_1)   \oplus V(\omega_2)$, we see that $W$ doesn't have $1$--dimensional submodule.  
		We get a contradiction. Therefore $\Omega_n \equiv 0$ in $H_0(\mathcal V)$. The proof follows from the fact that $\Omega_n$ generates the maximal ideal in $V^{k_n} (\g_n)$.
		
	\end{proof}
	
	Consider the commutant  $\mathcal C: = \mbox{Com} (L_{k_n} (\g_n) \otimes \mathcal F^{\ell},H _0( \mathcal V) )$, where $\mathcal F^{\ell}$ is the Heisenberg VOA as in \cite{ACPV-22}. We have the following decomposition.
	
	\begin{proposition} \label{decomposition} We have the following decomposition of  $H_0(\mathcal V)$ as a  $L_{k_n} (\g_n) \otimes \mathcal F^{\ell} \otimes \mathcal C$--module:
		$$H_0( \mathcal V )= \sum_{i \in \Z} U_i ^{(n)}  \otimes  \mathcal F^{\ell} _{i} \otimes  \mathcal C_{i}, $$
		where $U_i ^{(n)}$ is an  irreducible   $L_{k_n} (\g_n)$--module as in \cite{ACPV-22},  $\mathcal F^{\ell} _{i}$ is a module for the Heisenberg VOA   $\mathcal F^{\ell} $  generated by $J$ such such that $J(0)$ acts as $i \mbox{Id}$, and $C_{i}$ is a certain $\mathcal C$--module.
	\end{proposition}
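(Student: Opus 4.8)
The plan is to present $L_{k_n}(\g_n)\otimes\mathcal F^{\ell}$ as a subalgebra of $H_0(\mathcal V)$ and to decompose $H_0(\mathcal V)$ by combining the semisimplicity of $KL_{k_n}$ with the $J(0)$-grading. By Lemma \ref{lemma-homo} the homomorphism $L_{k_n}(\g^{\natural})=L_{k_n}(\g_n)\otimes\mathcal F^{\ell}\to H_0(\mathcal V)$ is nonzero, and since $L_{k_n}(\g^{\natural})$ is simple it is injective; thus $L_{k_n}(\g_n)\otimes\mathcal F^{\ell}$ embeds in $H_0(\mathcal V)$, and $H_0(\mathcal V)$ is a module over $L_{k_n}(\g_n)\otimes\mathcal F^{\ell}\otimes\mathcal C$. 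As $H_0(\mathcal V)$ is a quotient of $\mathcal W^{k_{n+1}}=H_0(V^{k_{n+1}}(\g_{n+1}))$, it inherits a $\tfrac12\Z_{\ge0}$-grading with finite-dimensional graded pieces, so its restriction to $L_{k_n}(\g_n)$ lies in $KL_{k_n}$. By \cite[Theorem 4.1]{ACPV-22} this category is semisimple with simple objects exactly $\{U_i^{(n)}\mid i\in\Z\}$, so $H_0(\mathcal V)=\bigoplus_{i\in\Z}U_i^{(n)}\otimes D_i$, where $D_i=\mathrm{Hom}_{L_{k_n}(\g_n)}(U_i^{(n)},H_0(\mathcal V))$ is a module over $\mathrm{Com}(L_{k_n}(\g_n),H_0(\mathcal V))\supseteq\mathcal F^{\ell}\otimes\mathcal C$.

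Next I would refine this using the Heisenberg field $J$. Since $J(0)$ acts semisimply with integer eigenvalues (established in Lemma \ref{lemma-homo}) and commutes with $L_{k_n}(\g_n)$, each $D_i$ splits into Fock modules, giving $H_0(\mathcal V)=\bigoplus_{i,j\in\Z}U_i^{(n)}\otimes\mathcal F^{\ell}_{j}\otimes C_{i,j}$, where $\mathcal F^{\ell}_{j}$ is the Fock module with $J(0)=j\,\mathrm{Id}$ and $C_{i,j}$ is a $\mathcal C$-module. The decisive point is that $C_{i,j}=0$ unless $j=i$. To see this, I would use that, being a quotient of the minimal $\mathcal W$-algebra $\mathcal W^{k_{n+1}}$, the vertex algebra $H_0(\mathcal V)$ is generated by its $\g^{\natural}$-currents and conformal vector (all of charge $(0,0)$) together with the two families of odd generators $G^{\pm}$, which transform under $\mathfrak{sl}_{2n}$ as $V(\omega_1)$ and $V(\omega_{2n-1})$ and sit precisely in the components of charge $(\pm1,\pm1)$ of the decomposition of $\mathcal W_{k_{n+1}}$ recorded in \cite[Theorem 4.1]{ACPV-22}. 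Since operator products add these charges---equivalently, by the fusion rules $U_i^{(n)}\times U_{i'}^{(n)}=U_{i+i'}^{(n)}$ and $\mathcal F^{\ell}_{j}\times\mathcal F^{\ell}_{j'}=\mathcal F^{\ell}_{j+j'}$---every vector of $H_0(\mathcal V)$ lies in a component with $j=i$, whence the asserted decomposition with $C_i:=C_{i,i}$.

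The step I expect to be the main obstacle is this last correlation, namely that the $\mathfrak{sl}_{2n}$-isotypic type $U_i^{(n)}$ forces the single Heisenberg charge $j=i$; concretely, that the odd generators $G^{\pm}$ carry matching $\mathfrak{sl}_{2n}$-weight and $J(0)$-eigenvalue. I would verify this from the minimal gradation of $\mathfrak{sl}_{2n+2}$ together with Proposition \ref{prop:min}: the formula $J(0)\bar v_{\lambda}=\langle\lambda,\omega_1-\omega_{m+1}\rangle\bar v_{\lambda}$ exhibits the $J(0)$-charge as a linear functional of the ambient $\mathfrak{sl}_{2n+2}$-weight, which for the quotient $\mathcal V$ of $V^{k_{n+1}}(\g_{n+1})$ ranges over the root lattice, so the $\mathfrak{sl}_{2n}$-type and the charge are locked together exactly as in $\mathcal W_{k_{n+1}}$. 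The remaining points---that $H_0(\mathcal V)$ indeed lies in $KL_{k_n}$, that the multiplicity spaces are honest $\mathcal C$-modules, and that the sum is direct---follow from the grading and from standard commutant (coset) decomposition theory, and require no new input.
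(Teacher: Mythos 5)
Your proposal is correct and is essentially the paper's own argument: the paper's (very terse) proof likewise observes that $H_0(\mathcal V)$ is generated by the charge-matched pieces $L_{k_n}(\g_n)\otimes\mathcal F^{\ell}\otimes\mathcal C$ and $U_{\pm 1}^{(n)}\otimes\mathcal F^{\ell}_{\pm 1}\otimes\mathcal C_{\pm 1}$ (the latter containing the weight-$3/2$ generators), and then invokes the fusion rules $U_i^{(n)}\times U_j^{(n)}=U_{i+j}^{(n)}$ together with the Heisenberg fusion rules to force $j=i$ in every component. Your write-up simply makes explicit the steps the paper leaves implicit (the embedding from Lemma \ref{lemma-homo}, semisimplicity of $KL_{k_n}$, and the $J(0)$-grading), with only the cosmetic slip of calling the $G^{\pm}$ fields ``odd''.
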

	\begin{proof}
		Note that $H_0( \mathcal V )$ is generated by $L_{k_{n}} (\g_{n}) \otimes \mathcal F ^{\ell}  \otimes \mathcal C +  U_1 ^{(n)} \otimes  \mathcal F ^{\ell} _1  \otimes \mathcal C_1 + U_{-1} ^{(n)} \otimes  \mathcal F ^{\ell} _{-1}  \otimes \mathcal C_{-1}$. The claim now follows from fusion rules for $L_{k_{n}} (\g_{n})$--modules from  \cite{ACPV-22}  and fusion rules for the Heisenberg VOAs.
	\end{proof}
	
	\begin{proof}[Proof of Theorem \ref{main1}] If $\mathcal V$ is not simple, then $H_0( \mathcal V )$ must contain a singular vector of $\g_{n+1}^{\natural} = \g_n \times {\C}$ highest weight $(0, b)$ where $b \ne 0$.  But  this is impossible since such singular vector  does not appear in the decomposition of $H_0( \mathcal V )$  as  a $L_{k_n} (\g_n) \otimes \mathcal F^{\ell} \otimes \mathcal C$--module.
	\end{proof}

	\begin{rem}
		In the following we show that Theorem \ref{main1} can be applied to $V^{-7/2}(\mathfrak{sl}_{6})$ and $V^{-9/2}(\mathfrak{sl}_{8})$ (see Section \ref{sec-classif} and Section \ref{sl8}). In particular, we obtain that the maximal ideal in those vertex algebras is generated by one singular vector of conformal weight 4. For the case $V^{-5/2}(\mathfrak{sl}_{4})$, that was proved in \cite{APV}.	
	\end{rem}

	\section{Affine vertex algebra associated to $\widehat{\mathfrak{sl}_{6}}$ at level $-7/2$}\label{sec-quotient}
	Throughout this section we denote by $\g$ the simple Lie algebra $\mathfrak{sl}_{6}$.
	
	\begin{teo}\label{sing-v-sl6}
		There is a singular vector $v$ in $V^{-7/2}(\g)$ of weight $-\frac{7}{2}\Lambda_0 - 4\delta + \omega_2 + \omega_4$. Its explicit formula is given in Appendix A.
	\end{teo}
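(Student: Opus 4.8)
The plan is to exhibit the asserted vector directly as an element of a specific finite-dimensional weight space of $V^{-7/2}(\g)$ and to verify that it is killed by the raising operators of $\hat{\g}$. Recall that as a vector space $V^{k}(\g)\cong U(\g\otimes t^{-1}\C[t^{-1}])$, so it carries a PBW basis of monomials $a_1(-n_1)\cdots a_r(-n_r)\mathbf 1$ with $a_j\in\g$, $n_j\ge 1$; such a monomial has conformal weight $\sum_j n_j$ and finite $\h$-weight $\sum_j\mathrm{wt}(a_j)$. The datum $-\tfrac72\Lambda_0-4\delta+\omega_2+\omega_4$ thus singles out the subspace $P\subset V^{-7/2}(\g)$ of conformal weight $4$ and finite weight $\omega_2+\omega_4$, which is spanned by the finitely many PBW monomials with $\sum_j n_j=4$ and $\sum_j\mathrm{wt}(a_j)=\omega_2+\omega_4$. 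First I would enumerate this basis explicitly (running over the partitions $4=4=3{+}1=2{+}2=2{+}1{+}1=1{+}1{+}1{+}1$ and over weight-homogeneous choices of the $a_j$) and write a general $v\in P$ as a linear combination with undetermined coefficients.

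Next, I would use that a weight vector $v$ is singular, i.e.\ annihilated by $\hat{\n}_+$, if and only if it is annihilated by the Chevalley raising generators $e_i(0)$, $i=1,\dots,5$, together with $e_0=f_\theta(1)$: these generate $\hat{\n}_+$ as a Lie algebra, and the vanishing propagates to all iterated brackets via the identity $[x,y]v=x(yv)-y(xv)$. I would then compute the action of each of these six operators on the basis of $P$, using the commutation relations of $\hat{\g}$ with $K$ acting by $k=-\tfrac72$. Each operator maps $P$ into a weight space of conformal weight $5$, and requiring all six images to vanish yields a homogeneous linear system in the undetermined coefficients with explicit rational entries. Solving this system produces the singular vector; the resulting solution is precisely the vector recorded in Appendix~A, and its weight $-\tfrac72\Lambda_0-4\delta+\omega_2+\omega_4$ is read off from its degree and $\h$-weight.

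The main obstacle is computational rather than conceptual: $\dim P$ is large, so assembling and row-reducing the system for the six raising conditions is only feasible with computer algebra, and the delicate point is organizing the PBW monomials and the action of $\hat{\n}_+$ so that the calculation stays manageable---for instance by grouping terms according to the $\g$-submodule $V(\omega_2+\omega_4)$ they span, which also reduces the number of independent conditions one must impose. It is essential that the level be exactly $-\tfrac72$: for generic $k$ the analogous system admits only the trivial solution, and the one-dimensional solution space emerges precisely from the degeneration of the vacuum module at this non-admissible level. Having produced $v$, its singularity is then a finite verification (annihilation by $e_1(0),\dots,e_5(0)$ and $f_\theta(1)$), which is what Appendix~A carries out in full.
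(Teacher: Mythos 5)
Your approach is essentially the same as the paper's: the paper's proof is exactly the computer-algebra verification that the explicit vector of Appendix A is annihilated by $e_{i,i+1}(0)$, $i=1,\dots,5$, and $f_{1,6}(1)=f_\theta(1)$, which suffices because these operators generate $\hat{\n}_+$. Your additional discussion of how to \emph{produce} the vector (undetermined coefficients on the conformal-weight-$4$, $\h$-weight-$(\omega_2+\omega_4)$ subspace and solving the resulting linear system at $k=-\tfrac72$) is the natural search procedure behind the same verification and introduces no gap.
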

	\begin{proof}
		Direct verification of relations $e_{i,i+1}(0)v=0$ for $i=1,\ldots,5$ and $f_{1,6}(1)v=0$ using \textit{Mathematica}.
	\end{proof}
	
	Let us denote by $$ \widetilde{L}_{-7/2}(\g) = V^{-7/2}(\g) / \langle v \rangle$$ the associated quotient vertex algebra.
	
	In the next propositon we determine Zhu’s algebra of $\widetilde{L}_{-7/2}(\g)$.
	
	\begin{proposition}
		Zhu’s algebra $A(\widetilde{L}_{-7/2}(\g))$ is isomorphic to $\mathcal{U}(\g)/ \langle v' \rangle$, where $\langle v' \rangle$ is a two-sided ideal
		in $\mathcal{U}(\g)$ generated by the following vector $v'$:
		\begin{eqnarray*}
			&&v'=\frac{7}{2} e_{1,5}  e_{2,6} - \frac{7}{2} e_{1,6} e_{2, 5} - \frac{4}{3} e_{1,2} e_{2, 5} e_{2,6} + 
			\frac{1}{3} e_{1,3} e_{2, 5} e_{3,6} - 
			\frac{5}{3} e_{1,3} e_{2,6} e_{3,5}+ 
			\frac{5}{3} e_{1,4} e_{2,5} e_{4,6} \\
			&&- 
			3 e_{1,4} e_{2,6} e_{4,5}- 
			\frac{1}{3} e_{1,5} e_{2,3} e_{3,6}- 
			\frac{5}{3} e_{1,5} e_{2,4} e_{4,6} + 
			\frac{5}{3} e_{1,6} e_{2,3} e_{3,5} + 
			3 e_{1,6} e_{2,4} e_{4,5} + \frac{4}{3}  f_{1,2} e_{1,5} e_{1,6} \\
			&&-
			h_1 e_{1,5} e_{2,6} - \frac{1}{3}  h_1 e_{1,6} e_{2, 5} - 
			\frac{4}{15} h_2 e_{1,5} e_{2,6} + \frac{4}{15} h_2 e_{1,6} e_{2, 5} + 
			\frac{8}{15} h_3 e_{1,5} e_{2,6} - \frac{8}{15} h_3 e_{1,6} e_{2, 5} \\
			&&+ 
			\frac{12}{5} h_4 e_{1,5} e_{2,6} - \frac{12}{5} h_4 e_{1,6} e_{2, 5} + 
			h_5 e_{1,5} e_{2,6} - h_5 e_{1,6} e_{2, 5} + 
			\frac{2}{3} e_{1,2} e_{2,3} e_{2, 5} e_{3,6}- 
			\frac{2}{3} e_{1,2} e_{2,3} e_{2,6} e_{3,5}\\
			&&+ 
			\frac{2}{3} e_{1,2} e_{2,4} e_{2, 5} e_{4,6} - 
			\frac{2}{3} e_{1,2} e_{2,4} e_{2,6} e_{4,5} + 
			\frac{2}{3} e_{1,2} e_{2, 5}^2 e_{5,6} + 
			2 e_{1,3} e_{2,4} e_{3,5} e_{4,6} - 
			2 e_{1,3} e_{2,4}  e_{3,6} e_{4,5} \\
			&&- 
			\frac{4}{3} e_{1,3} e_{2, 5} e_{3,4} e_{4,6} + 
			\frac{2}{3} e_{1,3} e_{2, 5} e_{3,5} e_{5,6} + 
			\frac{4}{3} e_{1,3} e_{2,6} e_{3,4} e_{4,5} - 
			2 e_{1,4} e_{2,3} e_{3,5} e_{4,6} + 
			2 e_{1,4} e_{2,3}  e_{3,6} e_{4,5} \\
			&&+ 
			\frac{2}{3} e_{1,4} e_{2,5} e_{4,5} e_{5,6} + 
			\frac{4}{3} e_{1,5} e_{2,3} e_{3,4} e_{4,6} - 
			\frac{2}{3} e_{1,5} e_{2,3} e_{3,5} e_{5,6} - 
			\frac{2}{3} e_{1,5} e_{2,4} e_{4,5} e_{5,6}- 
			\frac{4}{3} e_{1,6} e_{2,3} e_{3,4} e_{4,5} \\
			&&- 
			\frac{2}{5} f_{1,2} e_{1,2} e_{1,5} e_{2,6} + 
			\frac{2}{5} f_{1,2} e_{1,2} e_{1,6} e_{2,5} - 
			\frac{2}{3} f_{1,2} e_{1,3} e_{1,5} e_{3,6} + 
			\frac{2}{3} f_{1,2} e_{1,3} e_{1,6} e_{3,5} - 
			\frac{2}{3} f_{1,2} e_{1,4} e_{1,5} e_{4,6} \\
			&&+ 
			\frac{2}{3} f_{1,2} e_{1,4} e_{1,6} e_{4,5} - 
			\frac{2}{3} f_{1,2} e_{1,5}^2  e_{5,6} + 
			\frac{4}{15} f_{1,3} e_{1,3} e_{1,5} e_{2,6} - 
			\frac{4}{15} f_{1,3} e_{1,3} e_{1,6} e_{2,5} + 
			\frac{4}{15} f_{1,4} e_{1,4} e_{1,5} e_{2,6} \\
			&&- 
			\frac{4}{15} f_{1,4} e_{1,4} e_{1,6} e_{2,5} + 
			\frac{4}{15} f_{1,5} e_{1,5}^2 e_{2,6} - 
			\frac{4}{15} f_{1,5} e_{1,5} e_{1,6} e_{2,5} + 
			\frac{4}{15} f_{1,6} e_{1,5} e_{1,6} e_{2,6} - 
			\frac{4}{15} f_{1,6} e_{1,6}^2 e_{2,5} \\
			&&+ 
			\frac{4}{15} f_{2,3} e_{1,5} e_{2,3} e_{2,6} - 
			\frac{4}{15} f_{2,3} e_{1,6} e_{2,3} e_{2,5} + 
			\frac{4}{15} f_{2,4} e_{1,5} e_{2,4} e_{2,6} - 
			\frac{4}{15} f_{2,4} e_{1,6} e_{2,4} e_{2,5} + 
			\frac{4}{15} f_{2,5} e_{1,5} e_{2,5} e_{2,6} \\
			&&- 
			\frac{4}{15} f_{2,5} e_{1,6} e_{2,5}^2 + 
			\frac{4}{15} f_{2,6} e_{1,5} e_{2,6}^2  - 
			\frac{4}{15} f_{2,6} e_{1,6} e_{2,5} e_{2,6} - 
			\frac{4}{3} f_{3,4} e_{1,4} e_{2,5} e_{3,6} + 
			\frac{4}{3} f_{3,4} e_{1,4} e_{2,6} e_{3,5} \\
			&&+ 
			\frac{4}{3} f_{3,4} e_{1,5} e_{2,4} e_{3,6} - 
			\frac{16}{15} f_{3,4} e_{1,5} e_{2,6} e_{3,4} - 
			\frac{4}{3} f_{3,4} e_{1,6} e_{2,4} e_{3,5} + 
			\frac{16}{15} f_{3,4} e_{1,6} e_{2,5} e_{3,4} + 
			\frac{4}{15} f_{3,5} e_{1,5} e_{2,6} e_{3,5} \\
			&&- 
			\frac{4}{15} f_{3,5} e_{1,6} e_{2,5} e_{3,5} + 
			\frac{4}{15} f_{3,6} e_{1,5} e_{2,6} e_{3,6} - 
			\frac{4}{15} f_{3,6} e_{1,6} e_{2,5} e_{3,6} + 
			\frac{4}{15} f_{4,5} e_{1,5} e_{2,6} e_{4,5} - 
			\frac{4}{15} f_{4,5} e_{1,6} e_{2,5} e_{4,5} \\
			&&+ 
			\frac{4}{15} f_{4,6} e_{1,5} e_{2,6} e_{4,6} - 
			\frac{4}{15} f_{4,6} e_{1,6} e_{2,5} e_{4,6} - 
			\frac{2}{3} f_{5,6} e_{1,2} e_{2,6}^2 - 
			\frac{2}{3} f_{5,6} e_{1,3} e_{2,6} e_{3,6} - 
			\frac{2}{3} f_{5,6} e_{1,4} e_{2,6} e_{4,6} \\
			&&- 
			\frac{2}{5} f_{5,6} e_{1,5} e_{2,6} e_{5,6} + 
			\frac{2}{3} f_{5,6} e_{1,6} e_{2,3} e_{3,6} + 
			\frac{2}{3} f_{5,6} e_{1,6} e_{2,4}  e_{4,6} + 
			\frac{2}{5} f_{5,6} e_{1,6} e_{2,5} e_{5,6} + 
			\frac{2}{3} f_{5,6} f_{1,2} e_{1,6}^2 \\
			&&+ 
			\frac{2}{3} h_1 e_{1,5} e_{2,3} e_{3,6} + 
			\frac{2}{3} h_1 e_{1,5} e_{2,4} e_{4,6} + 
			\frac{2}{3} h_1 e_{1,5} e_{2,5} e_{5,6} - 
			\frac{2}{3} h_1 e_{1,6} e_{2,3} e_{3,5} - 
			\frac{2}{3} h_1 e_{1,6} e_{2,4} e_{4,5} \\
			&&- 
			\frac{2}{3} h_1 f_{5,6} e_{1,6} e_{2,6} + 
			\frac{2}{5} h_1 h_2 e_{1,5} e_{2,6} - 
			\frac{2}{5} h_1 h_2 e_{1,6} e_{2,5} + 
			\frac{2}{15} h_1 h_3 e_{1,5} e_{2,6} - 
			\frac{2}{15} h_1 h_3 e_{1,6} e_{2,5}\\
			&& - 
			\frac{2}{15} h_1 h_4 e_{1,5} e_{2,6} + 
			\frac{2}{15} h_1 h_4 e_{1,6} e_{2,5} - 
			\frac{2}{5} h_1 h_5 e_{1,5} e_{2,6} - 
			\frac{4}{15} h_1 h_5 e_{1,6} e_{2,5} - 
			\frac{2}{3} h_2 e_{1,3} e_{2,5} e_{3,6} \\
			&&+ 
			\frac{2}{3} h_2 e_{1,3}  e_{2,6} e_{3,5} - 
			\frac{2}{3} h_2 e_{1,4} e_{2,5} e_{4,6} + 
			\frac{2}{3} h_2 e_{1,4} e_{2,6} e_{4,5} + 
			\frac{2}{3} h_2 e_{1,5} e_{2,3} e_{3,6} + 
			\frac{2}{3} h_2 e_{1,5} e_{2,4} e_{4,6} \\
			&&- 
			\frac{2}{3} h_2 e_{1,6} e_{2,3} e_{3,5} - 
			\frac{2}{3} h_2 e_{1,6} e_{2,4} e_{4,5} + 
			\frac{2}{5} h_2 h_2 e_{1,5} e_{2,6} - 
			\frac{2}{5} h_2 h_2 e_{1,6} e_{2,5} + 
			\frac{4}{15} h_2 h_3 e_{1,5} e_{2,6} \\
			&&- 
			\frac{4}{15} h_2 h_3 e_{1,6} e_{2,5} - 
			\frac{4}{15} h_2 h_4 e_{1,5} e_{2,6} + 
			\frac{4}{15} h_2 h_4 e_{1,6} e_{2,5} - 
			\frac{2}{15} h_2 h_5 e_{1,5} e_{2,6} + 
			\frac{2}{15} h_2 h_5 e_{1,6} e_{2,5} \\
			&&- 
			\frac{2}{3} h_3 e_{1,3} e_{2,5} e_{3,6} + 
			\frac{2}{3} h_3 e_{1,3} e_{2,6} e_{3,5} + 
			\frac{2}{3} h_3 e_{1,4} e_{2,5} e_{4,6} - 
			\frac{2}{3} h_3 e_{1,4} e_{2,6} e_{4,5} + 
			\frac{2}{3} h_3 e_{1,5} e_{2,3} e_{3,6} \\
			&&- 
			\frac{2}{3} h_3 e_{1,5} e_{2,4} e_{4,6} - 
			\frac{2}{3} h_3 e_{1,6} e_{2,3} e_{3,5} + 
			\frac{2}{3} h_3 e_{1,6} e_{2,4} e_{4,5} - 
			\frac{2}{15} h_3 h_3 e_{1,5} e_{2,6} + 
			\frac{2}{15} h_3 h_3 e_{1,6} e_{2,5} \\
			&&+ 
			\frac{4}{15} h_3 h_4 e_{1,5} e_{2,6} - 
			\frac{4}{15} h_3 h_4 e_{1,6} e_{2,5} + 
			\frac{2}{15} h_3 h_5 e_{1,5}  e_{2,6} - 
			\frac{2}{15} h_3 h_5 e_{1,6} e_{2,5} + 
			\frac{2}{3} h_4 e_{1,3} e_{2,5} e_{3,6} \\
			&&- 
			\frac{2}{3} h_4 e_{1,3} e_{2,6} e_{3,5} + 
			\frac{2}{3} h_4 e_{1,4} e_{2,5} e_{4,6} - 
			\frac{2}{3} h_4 e_{1,4} e_{2,6} e_{4,5}- 
			\frac{2}{3} h_4 e_{1,5} e_{2,3} e_{3,6} - 
			\frac{2}{3} h_4 e_{1,5} e_{2,4} e_{4,6} \\
			&&+ 
			\frac{2}{3} h_4 e_{1,6} e_{2,3} e_{3,5} + 
			\frac{2}{3} h_4 e_{1,6} e_{2,4} e_{4,5} + 
			\frac{2}{5} h_4 h_4 e_{1,5} e_{2,6} - 
			\frac{2}{5} h_4 h_4 e_{1,6} e_{2,5} + 
			\frac{2}{5} h_4 h_5 e_{1,5} e_{2,6} \\
			&&- 
			\frac{2}{5} h_4 h_5 e_{1,6} e_{2,5} - 
			\frac{2}{3} h_5 e_{1,2} e_{2,5} e_{2,6} - 
			\frac{2}{3} h_5 e_{1,3} e_{2,6} e_{3,5} - 
			\frac{2}{3} h_5 e_{1,4} e_{2,6} e_{4,5} + 
			\frac{2}{3} h_5 e_{1,6} e_{2,3} e_{3,5} \\
			&&+ 
			\frac{2}{3} h_5 e_{1,6} e_{2,4} e_{4,5}+ 
			\frac{2}{3} h_5 f_{1,2} e_{1,5} e_{1,6}.
		\end{eqnarray*}
	\end{proposition}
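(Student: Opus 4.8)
The plan is to reduce the statement to a single explicit computation of the image of the singular vector $v$ under Zhu's map. Recall from the preliminaries that $A(V^{-7/2}(\g))\cong\mathcal U(\g)$ and that, for a quotient by an ideal generated by a singular vector, $A\big(V^{-7/2}(\g)/\langle v\rangle\big)\cong A(V^{-7/2}(\g))/\langle v'\rangle\cong\mathcal U(\g)/\langle v'\rangle$, where $v'$ is the image of $v$ under the canonical linear Zhu projection $[\,\cdot\,]\colon V^{-7/2}(\g)\to A(V^{-7/2}(\g))$ (cf. \cite{FZ,Zhu96}). Thus the isomorphism part of the Proposition is immediate from this general framework, and the entire content is to evaluate $[v]$ for the vector $v$ of Theorem \ref{sing-v-sl6}, whose explicit form is recorded in Appendix A, and to check that it coincides with the displayed $v'$.

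To carry out the evaluation I would first expand $v$ in the PBW basis of $V^{-7/2}(\g)$, as a linear combination of monomials $a_1(-n_1)\cdots a_s(-n_s)\mathbf 1$ with $a_j\in\g$ and $n_1+\cdots+n_s=4$ (the conformal weight of $v$), so that $s\le 4$; this already explains why $v'$ contains only products of at most four elements of $\g$. I would then apply the standard reductions modulo $O(V^{-7/2}(\g))$: from $(x(-1)\mathbf 1)\circ u$ one gets $x(-2)u\equiv -x(-1)u$, and, using the $L(-1)$--descendants, $x(-3)u\equiv x(-1)u$, etc.; applied iteratively, these relations reduce every monomial to one all of whose modes equal $-1$. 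Finally I would rewrite the resulting normally ordered products in the PBW basis of $\mathcal U(\g)$ by means of the $*$--product: for $x,y\in\g$ one has $[x(-1)\mathbf 1]*[y(-1)\mathbf 1]=xy$ in $\mathcal U(\g)$, while $[x(-1)y(-1)\mathbf 1]=yx$, the discrepancy being the commutator correction $[x,y]$; applying this repeatedly turns each all-modes-$(-1)$ monomial into an element of $\mathcal U(\g)$. Collecting contributions degree by degree produces $v'$, and the quadratic, cubic and quartic terms in the displayed formula are precisely the images of the weight-$4$ monomials of $v$ after these reductions.

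The mathematics here is routine, so the main obstacle is purely the bookkeeping: reducing the higher modes $x(-n)$ with $n\ge 2$ and reordering the noncommutative $*$--products into a fixed PBW basis each generate many lower-order correction terms, and a single mishandled sign or coefficient would corrupt the answer. For this reason I would perform the reduction in \emph{Mathematica}, exactly as the singular-vector identity in Theorem \ref{sing-v-sl6} was verified, rather than by hand. To make the result trustworthy I would add an independent check based on the module-theoretic meaning of $[v]$: the element $v'\in\mathcal U(\g)$ must act on the highest weight vector $v_\mu$ of every Verma module $M(\mu)$ exactly as the zero mode $o(v)=v_{(3)}$ acts on the top level, and since an element of $\mathcal U(\g)$ is determined by this family of actions, matching a few weights $\mu$ pins down every coefficient and confirms the displayed $v'$. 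As a final consistency test I would check that $v'$ carries $\g$--weight $\omega_2+\omega_4$, the weight of $v$: every monomial in the displayed expression manifestly has this weight, e.g. $e_{1,5}\,e_{2,6}$ has weight $(\epsilon_1-\epsilon_5)+(\epsilon_2-\epsilon_6)=2\epsilon_1+2\epsilon_2+\epsilon_3+\epsilon_4=\omega_2+\omega_4$.
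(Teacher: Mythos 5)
Your core argument is exactly the paper's proof: the isomorphism statement is the general Frenkel--Zhu framework ($A(V^{k}(\g))\cong\mathcal{U}(\g)$ and $A(V^{k}(\g)/\langle v\rangle)\cong\mathcal{U}(\g)/\langle v'\rangle$, \cite[Proposition 1.4.2, Theorem 3.1.1]{FZ}), and the entire content is the direct (computer-assisted) computation of the image $v'$ of the singular vector $v$ from Theorem \ref{sing-v-sl6}. Your reduction rules are also correct as stated: $x(-2)u\equiv -x(-1)u$ and $x(-3)u\equiv x(-1)u$ modulo $O(V)$, and $[x(-1)y(-1)\mathbf 1]=xy-[x,y]=yx$ under the $*$-product bookkeeping, so the plan would indeed reproduce $v'$.

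However, your proposed ``independent check'' is vacuous and the justification given for it is false. The element $v'$ has adjoint weight $\omega_2+\omega_4\neq 0$ (as your own final weight computation confirms), so for every highest weight vector $v_\mu$ the vector $v'v_\mu$ lies in the weight space of weight $\mu+\omega_2+\omega_4$ of a highest weight module, which is zero; hence $v'v_\mu=0$ identically, and matching these actions pins down nothing. Moreover, the claim that an element of $\mathcal{U}(\g)$ is determined by its action on highest weight vectors is wrong in general: any root vector $e_\alpha$, or any element of $\mathcal{U}(\g)\n_+$, annihilates all of them. The correct module-theoretic check is the one the paper performs in Lemma \ref{lemma-polinomi}: one must first pass to the \emph{zero-weight} component $R_0$ of the $\mathcal{U}(\g)$-submodule generated by $v'$ under the adjoint action (i.e., apply suitable lowering operators $(f_{i,j}\cdots)_L$ to $v'$), and only those zero-weight elements act on $v_\mu$ by scalars $p(\mu)$ that carry information; equivalently, one can test the action of $o(v)$ on the full top level $V(\mu)$ of a generalized Verma module rather than on its highest weight vector alone. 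Since this check is supplementary, its failure does not invalidate your main argument, which coincides with the paper's.
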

	
	\begin{proof}
		By direct calulation, one obtains that $v'$ is the image of singular vector $v$ in $\mathcal{U}(\g)$. The proof now follows from \cite[Proposition 1.4.2, Theorem 3.1.1]{FZ}.
	\end{proof}
	
	\section{Classification of irreducible $\widetilde{L}_{-7/2}(\mathfrak{sl}_{6})$--modules in the category $\mathcal{O}$}\label{sec-classif}
	
	Let $\g=\mathfrak{sl}_{6}$. In this section we study representation theory of the vertex algebra $\widetilde{L}_{-7/2}(\g)$ defined in Section \ref{sec-quotient}. As a consequence, we obtain that $\widetilde{L}_{-7/2}(\g)$ satisfies assumptions of Theorem \ref{main1}.
	
	In the following lemma we determine the basis of the space $\mathcal{P}_0$  defined in Subsection \ref{subsec-Zhu}. 
	
	\begin{lemma}\label{lemma-polinomi}
		We have $$\mathcal{P}_0 = span_{\C} \{ p_i \ | \ i= 1, \ldots , 9\},$$
		where
		\begin{eqnarray*}
			&&p_1(h) =h_1 h_3 ( 65 + 10 h_1 + 12 h_2 - 8 h_1 h_2 - 8 h_2^2 + 66 h_3 + 4 h_1 h_3 + 8 h_2 h_3 + 
			16 h_3^2 + 112 h_4 \\
			&&\qquad+ 16 h_1 h_4 + 32 h_2 h_4 + 48 h_3 h_4 + 32 h_4^2 + 
			40 h_5 + 8 h_1 h_5 + 16 h_2 h_5 + 24 h_3 h_5 + 32 h_4 h_5 ) , \\
			&&p_2(h)=h_1 h_4 (45 + 10 h_1 + 28 h_2 + 8 h_1 h_2 + 8 h_2^2 + 84 h_3 + 16 h_1 h_3 + 
			32 h_2 h_3 + 24 h_3^2 + 38 h_4 \\
			&&\qquad+ 4 h_1 h_4 + 8 h_2 h_4 + 32 h_3 h_4 + 
			8 h_4^2 - 8 h_1 h_5 - 16 h_2 h_5 + 16 h_3 h_5 + 8 h_4 h_5),\\
			&&p_3(h)= h_1 h_5 (75 + 30 h_1 + 68 h_2 + 8 h_1 h_2 + 8 h_2^2 + 84 h_3 + 16 h_1 h_3 + 
			32 h_2 h_3 + 24 h_3^2 + 68 h_4 \\
			&&\qquad+ 24 h_1 h_4 + 48 h_2 h_4 + 32 h_3 h_4 + 
			8 h_4^2 + 30 h_5 + 12 h_1 h_5 + 24 h_2 h_5 + 16 h_3 h_5 + 8 h_4 h_5),\\
			&&p_4(h)=h_2 h_4 (75 + 40 h_1 + 58 h_2 + 8 h_1 h_2 + 8 h_2^2 + 84 h_3 + 16 h_1 h_3 + 
			32 h_2 h_3 + 24 h_3^2 + 58 h_4 \\
			&&\qquad+ 24 h_1 h_4 + 28 h_2 h_4 + 32 h_3 h_4 + 
			8 h_4^2 + 40 h_5 + 32 h_1 h_5 + 24 h_2 h_5 + 16 h_3 h_5 + 8 h_4 h_5),\\
			&&p_5(h)=h_2 h_5 (45 + 38 h_2 + 8 h_1 h_2 + 8 h_2^2 + 84 h_3 + 16 h_1 h_3 + 32 h_2 h_3 + 24 h_3^2 + 28 h_4 \\
			&&\qquad
			- 16 h_1 h_4 + 8 h_2 h_4 + 32 h_3 h_4 + 8 h_4^2 + 10 h_5 - 
			8 h_1 h_5 + 4 h_2 h_5 + 16 h_3 h_5 + 8 h_4 h_5),\\
			&&p_6(h)=h_3 h_5 (65 + 40 h_1 + 112 h_2 + 32 h_1 h_2 + 32 h_2^2 + 66 h_3 + 24 h_1 h_3 + 
			48 h_2 h_3 + 16 h_3^2 + 12 h_4 \\
			&&\qquad
			+ 16 h_1 h_4 + 32 h_2 h_4 + 8 h_3 h_4 - 
			8 h_4^2 + 10 h_5 + 8 h_1 h_5 + 16 h_2 h_5 + 4 h_3 h_5 - 8 h_4 h_5),\\
			&&p_7(h)=h_2 (\frac{7}{2} + h_1 + h_2 +  h_3 +  h_4 +  h_5) (15 + 15 h_1 + 21 h_2 + 
			6 h_1 h_2 + 6 h_2^2 + 23 h_3 + 12 h_1 h_3 \\
			&&\qquad
			+ 14 h_2 h_3 + 8 h_3^2 + h_4 + 
			8 h_1 h_4 + 6 h_2 h_4 + 4 h_3 h_4 - 4 h_4^2 - 5 h_5 + 4 h_1 h_5 - 2 h_2 h_5 - 8 h_3 h_5 - 4 h_4 h_5),\\
			&&p_8(h)= h_3(\frac{7}{2} + h_1 + h_2 +  h_3 +  h_4 +  h_5)(5 - 5 h_1 + 9 h_2 + 4 h_1 h_2 + 4 h_2^2 + 7 h_3 - 2 h_1 h_3 + 6 h_2 h_3 \\
			&&\qquad
			+ 2 h_3^2 + 9 h_4 - 8 h_1 h_4 + 4 h_2 h_4 + 6 h_3 h_4 + 4 h_4^2 - 5 h_5 - 
			4 h_1 h_5 - 8 h_2 h_5 - 2 h_3 h_5 + 4 h_4 h_5),\\
			&&p_9(h)=h_4 (\frac{7}{2} + h_1 + h_2 +  h_3 +  h_4 +  h_5)(15 - 5 h_1 + h_2 - 4 h_1 h_2 - 4 h_2^2 + 23 h_3 - 8 h_1 h_3 + 4 h_2 h_3 \\
			&&\qquad+ 
			8 h_3^2 + 21 h_4 - 2 h_1 h_4 + 6 h_2 h_4 + 14 h_3 h_4 + 6 h_4^2 + 15 h_5 + 4 h_1 h_5 + 8 h_2 h_5 + 12 h_3 h_5 + 6 h_4 h_5).
		\end{eqnarray*}
	\end{lemma}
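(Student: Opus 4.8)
The plan is to unwind the definition of $\mathcal{P}_0$ from Subsection \ref{subsec-Zhu} and carry out the resulting computation inside $\mathcal{U}(\g)=\mathcal{U}(\mathfrak{sl}_6)$. Since $v$ is a singular vector, its image $v'$ (the explicit element exhibited in the previous Proposition) satisfies $\mathrm{ad}(e_{i,i+1})\,v'=0$ for $i=1,\dots,5$ and has $\h$-weight $\omega_2+\omega_4$; hence the $\mathcal{U}(\g)$-submodule $R=\mathcal{U}(\g)\cdot_{\mathrm{ad}}v'$ generated under the adjoint action is the finite-dimensional irreducible module $V(\omega_2+\omega_4)$. In particular its zero-weight space $R_0=R\cap\mathcal{U}(\g)^{\h}$ is finite-dimensional, and the whole task reduces to computing the image of $R_0$ under the linear map $r\mapsto p_r$.

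First I would produce a spanning set of $R_0$ explicitly. Starting from $v'$, I repeatedly apply the lowering operators $\mathrm{ad}(f_{i,i+1})$, $i=1,\dots,5$, and keep exactly those words that lower the weight all the way to $0$; equivalently, $R_0$ is spanned by the weight-zero PBW monomials occurring in $R$. This is a finite but large manipulation, best performed in \emph{Mathematica} as in Theorem \ref{sing-v-sl6}, using exact rational arithmetic.

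Next, for each generator $r\in R_0$ I would compute the associated polynomial $p_r\in\mathcal{S}(\h)$, characterised by $r\,v_\mu=p_r(\mu)\,v_\mu$. Because $r$ has weight zero, $p_r$ is obtained by the projection $\mathcal{U}(\g)\to\mathcal{S}(\h)$ along $\n_-\mathcal{U}(\g)+\mathcal{U}(\g)\n_+$: I put $r$ in PBW order, commute every raising operator to the right where it annihilates $v_\mu$, and evaluate the remaining Cartan monomials on $\mu$, recording the answer in the variables $h_i$ equal to the eigenvalue of the simple coroot $h_i=\alpha_i^\vee$ on $v_\mu$. Since $v'$ lies in PBW-degree at most $4$ and the adjoint action preserves this filtration, each $p_r$ is a polynomial of degree at most $4$ in $h_1,\dots,h_5$, consistent with the stated $p_1,\dots,p_9$.

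Finally I would collect all the polynomials $p_r$ and, by elementary linear algebra --- comparing coefficients, or evaluating at sufficiently many integral points $h\in\Z^5$ --- show that their span coincides with $\mathrm{span}_{\C}\{p_1,\dots,p_9\}$; this amounts to checking that the nine listed polynomials are linearly independent, that each occurs as some $p_r$, and that no further independent polynomial is produced. The main obstacle is entirely the scale and exact bookkeeping of the computation: generating $R_0$ \emph{completely} so that no polynomial in $\mathcal{P}_0$ is missed, and performing the projection onto $\mathcal{S}(\h)$ for every generator without arithmetic error. The factored shape of the $p_i$ provides a convenient consistency check but plays no role in the proof itself.
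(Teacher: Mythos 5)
Your proposal is correct in principle and shares the paper's computational skeleton (realize $R\cong V(\omega_2+\omega_4)$, apply lowering operators to $v'$, project onto $\mathcal{S}(\h)$ modulo $\n_-\mathcal{U}(\g)+\mathcal{U}(\g)\n_+$, then do linear algebra), but it diverges at exactly the step you flag as the main obstacle: completeness. You propose to generate \emph{all} of $R_0$ by exhaustively applying weight-lowering words to $v'$, which is finite and rigorous but enormous. The paper sidesteps this entirely with an a priori dimension count: by \cite[Lemma 5.2]{AP-08} the zero-weight space of $V(\omega_2+\omega_4)$ for $\mathfrak{sl}_6$ has $\dim R_0=9$, and since $r\mapsto p_r$ is linear on $R_0$, this gives $\dim\mathcal{P}_0\le 9$ before any computation. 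Consequently it suffices to exhibit just nine explicit lowering-operator words (e.g.\ $(f_{1,2}f_{2,3}f_{2,4}f_{3,6}f_{4,5})_L v'$, etc.) whose projections $p_1,\dots,p_9$ are linearly independent; the matching upper and lower bounds then force $\mathcal{P}_0=\mathrm{span}_\C\{p_1,\dots,p_9\}$, with no need to verify that the generating set of $R_0$ is exhaustive. What your route buys is self-containedness (no appeal to the multiplicity computation in \cite{AP-08}); what the paper's route buys is a reduction from an exhaustive enumeration of the zero-weight space to nine targeted computations plus one independence check, which is why the "bookkeeping" worry you raise never arises there. If you adopt the dimension bound, your argument collapses to the paper's proof.
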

	
	\begin{proof}
		We consider $\mathcal{U}(\g)$–submodule $R$ of $\mathcal{U}(\g)$ generated by vector $v'$ under the adjoint action $X_L f = [X, f]$, for $X \in \g, \ f \in \mathcal{U}(\g)$. It is clearly isomorphic to $V(\omega_2 + \omega_4)$. From \cite[Lemma 5.2.]{AP-08} it follows dim$R_0=9$, which implies dim$\mathcal{P}_0 \leq 9$.
		Using the explicit formula for $v'$, one obtains:
		\begin{eqnarray*}
			(f_{1,2} f_{2,3} f_{2,4} f_{3,6} f_{4,5})_L v' &&\hspace*{-5mm}\in \frac{1}{30} p_1(h) + \n_-\mathcal{U}(\g) + \mathcal{U}(\g) \n_+,\\
			(f_{1,2} f_{2,4} f_{2,5} f_{4,6})_L v' &&\hspace*{-5mm}\in \frac{1}{30} p_2(h) + \n_-\mathcal{U}(\g) + \mathcal{U}(\g) \n_+,\\
			(f_{1,2} f_{2,5} f_{2,6})_L v' &&\hspace*{-5mm}\in \frac{1}{30} p_3(h) + \n_-\mathcal{U}(\g) + \mathcal{U}(\g) \n_+,\\
			(f_{1,5} f_{2,3} f_{3,6} + f_{1,4} f_{2,3} f_{3,6} f_{4,5})_L v' &&\hspace*{-5mm}\in \frac{1}{30} p_4(h) + \n_-\mathcal{U}(\g) + \mathcal{U}(\g) \n_+,\\
			(f_{1,3}f_{2,6}f_{3,5} - f_{1,2} f_{2,5} f_{2,6} - f_{1,3} f_{2,5} f_{3,4} f_{4,6})_L v'&&\hspace*{-5mm}\in \frac{1}{30} p_5(h) + \n_-\mathcal{U}(\g) + \mathcal{U}(\g) \n_+,\\
			(f_{1,3} f_{2,6} f_{3,5} - f_{1,4} f_{4,5} f_{2,6} + f_{1,3} f_{2,5} f_{3,6} - f_{1,4} f_{2,5} f_{4,6})_L v' &&\hspace*{-5mm}\in \frac{1}{30} p_6(h) + \n_-\mathcal{U}(\g) + \mathcal{U}(\g) \n_+,\\
			(f_{1,6} f_{2,3} f_{3,5})_L v' &&\hspace*{-5mm}\in \frac{1}{15} p_7(h) + \n_-\mathcal{U}(\g) + \mathcal{U}(\g) \n_+,\\
			(f_{1,6} f_{2,3} f_{3,5} - f_{1,6} f_{2,4} f_{4,5} )_L v'&&\hspace*{-5mm}\in \frac{1}{15} p_8(h) + \n_-\mathcal{U}(\g) + \mathcal{U}(\g) \n_+,\\
			(f_{1,6} f_{2,5} + f_{1,6} f_{2,4} f_{4,5})_L v' &&\hspace*{-5mm}\in \frac{1}{15} p_9(h) + \n_-\mathcal{U}(\g) + \mathcal{U}(\g) \n_+.
		\end{eqnarray*}
		Since polynomials $p_1, \ldots , p_9$ are linearly independent, we obtain the claim of Lemma.
	\end{proof}
	
	The following theorem gives the classification of irreducible $\widetilde{L}_{-7/2}(\g)$–modules in the category $\mathcal{O}$.
	
	\begin{theorem}\label{prop-Zhu-O}
		The complete list of irreducible $\widetilde{L}_{-7/2}(\g)$–modules in the category $\mathcal{O}$ is given by the set: \begin{equation}\label{eq-hw-katO}
			\{ L_{-7/2}(\mu_i (t)) \ | \ i=1, \ldots , 96, \ t \in \C  \},
		\end{equation}
		where:
		\begin{longtable}{ p{7.5cm} p{9cm} }
			$\mu_1(t)=t\omega_1$, & $\mu_{49}(t)=\frac{1}{2}\omega_1 - \frac{3}{2}\omega_2 + t \omega_5$,  \\ 
			$\mu_2(t)=t\omega_5$, & 	$\mu_{50}(t)=-\frac{1}{2}\omega_1 - \frac{3}{2}\omega_3 + t \omega_5$,  \\  
			$\mu_3(t)=t\omega_1+(-t-\frac{7}{2} ) \omega_{2}$, & 	$\mu_{51}(t)=-\frac{3}{2}\omega_1 - \frac{3}{2}\omega_4 + t \omega_5$,  \\ 
			$\mu_4(t)=t\omega_2+(-t-\frac{7}{2} ) \omega_{3}$, & 	$\mu_{52}(t)=-\frac{1}{2}\omega_2 - \frac{1}{2}\omega_3 + t \omega_5 $, \\ 
			$\mu_5(t)=t\omega_3+(-t-\frac{7}{2} ) \omega_{4}$, & $\mu_{53}(t)=-\frac{3}{2}\omega_2 - \frac{1}{2}\omega_4 + t \omega_5$,  \\ 
			$\mu_6(t)=t\omega_4+(-t-\frac{7}{2} ) \omega_{5}$, & $\mu_{54}(t)=-\frac{3}{2}\omega_3 + \frac{1}{2}\omega_4 + t \omega_5$, \\ 
			$\mu_7(t)=t\omega_1+(-t-1 ) \omega_{2}$, & $\mu_{55}(t)= t \omega_1 - \frac{3}{2}\omega_2 + \frac{1}{2}\omega_3 - \frac{3}{2} \omega_4$,  \\ 
			$\mu_8(t)=t\omega_2+(-t-1 ) \omega_{3}$, & $\mu_{56}(t)=t \omega_1 - \frac{1}{2}\omega_2 - \frac{1}{2}\omega_3 - \frac{3}{2} \omega_5$   \\
			$\mu_9(t)=t\omega_3+(-t-1 ) \omega_{4}$, & $\mu_{57}(t)=t \omega_1 - \frac{3}{2}\omega_2 - \frac{1}{2}\omega_4 - \frac{1}{2} \omega_5$,  \\ 
			$\mu_{10}(t)=t\omega_4+(-t-1 ) \omega_{5}$, & 	$\mu_{58}(t)=t \omega_1 - \frac{3}{2}\omega_3 + \frac{1}{2}\omega_4 - \frac{3}{2} \omega_5 $,  \\  
			$\mu_{11}(t)=t\omega_1-\frac{5}{2} \omega_{2}$, & 	$\mu_{59}(t)=- \frac{3}{2}\omega_1 + \frac{1}{2}\omega_2 - \frac{3}{2} \omega_3 + t \omega_5$,  \\ 
			$\mu_{12}(t)=t\omega_1-\frac{5}{2} \omega_{3}$, & 	$\mu_{60}(t)=- \frac{1}{2}\omega_1 - \frac{1}{2}\omega_2 - \frac{3}{2} \omega_4 + t \omega_5$, \\ 
			$\mu_{13}(t)=t\omega_1-\frac{5}{2} \omega_{4}$, & 	$\mu_{61}(t)=- \frac{3}{2}\omega_1 - \frac{1}{2}\omega_3 - \frac{1}{2} \omega_4 + t \omega_5$,  \\ 
			$\mu_{14}(t)=t\omega_1-\frac{5}{2} \omega_{5}$, & $\mu_{62}(t)=- \frac{3}{2}\omega_2 + \frac{1}{2}\omega_3 - \frac{3}{2} \omega_4 + t \omega_5$, \\ 
			$\mu_{15}(t)= -\frac{5}{2} \omega_{1} + t \omega_5$, & $\mu_{63}(t)= t \omega_1 +(  -\frac{5}{2} - t ) \omega_2 + \frac{1}{2} \omega_3 - \frac{3}{2} \omega_4$,  \\ 
			$\mu_{16}(t)= -\frac{5}{2} \omega_{2} + t \omega_5$, & $\mu_{64}(t)=t \omega_1 +(  -\frac{3}{2} - t ) \omega_2 - \frac{1}{2} \omega_3 - \frac{3}{2} \omega_5$,   \\
			$\mu_{17}(t)= -\frac{5}{2} \omega_{3} + t \omega_5$, & $\mu_{65}(t)=t \omega_1 +(  -\frac{5}{2} - t ) \omega_2 - \frac{1}{2} \omega_4 - \frac{1}{2} \omega_5$,  \\ 
			$\mu_{18}(t)= -\frac{5}{2} \omega_{4} + t \omega_5$, & 	$\mu_{66}(t)=t \omega_1 +(  -1 - t ) \omega_2 - \frac{1}{2} \omega_3 - \frac{1}{2} \omega_4$,  \\  
			$\mu_{19}(t)= -\frac{5}{2} \omega_{1} + t \omega_2 + (-1-t) \omega_3$, & 	$\mu_{67}(t)=t \omega_1 +(  -1 - t ) \omega_2 - \frac{3}{2} \omega_3 - \frac{1}{2} \omega_5$,  \\ 
			$\mu_{20}(t)=-\frac{5}{2} \omega_{1} + t \omega_3 + (-1-t) \omega_4$, & 	$\mu_{68}(t)= t \omega_1 +(  -1 - t ) \omega_2 - \frac{3}{2} \omega_4 + \frac{1}{2} \omega_5$, \\ 
			$\mu_{21}(t)=-\frac{5}{2} \omega_{1} + t \omega_4 + (-1-t) \omega_5$, & 	$\mu_{69}(t)=t \omega_2 +(  -\frac{5}{2} - t ) \omega_3 + \frac{1}{2} \omega_4 - \frac{3}{2} \omega_5$,  \\ 
			$\mu_{22}(t)=\frac{1}{2} \omega_{1} + t \omega_2 + (-\frac{5}{2}-t) \omega_3$, & $\mu_{70}(t)=t \omega_2 +(  -1 - t ) \omega_3 - \frac{3}{2} \omega_4 + \frac{1}{2} \omega_5$, \\ 
			$\mu_{23}(t)= - \frac{1}{2} \omega_{1} + t \omega_3 + (-\frac{5}{2}-t) \omega_4$, & $\mu_{71}(t)=- \frac{3}{2} \omega_1 + \frac{1}{2} \omega_2 + t \omega_3 +(  -\frac{5}{2} - t ) \omega_4$,  \\ 
			$\mu_{24}(t)=- \frac{3}{2} \omega_{1} + t \omega_4 + (-\frac{5}{2}-t) \omega_5$, & $\mu_{72}(t)=- \frac{1}{2} \omega_1 - \frac{1}{2} \omega_2 + t \omega_4 +(  -\frac{5}{2} - t ) \omega_5$,  \\
			$\mu_{25}(t)=-\frac{5}{2} \omega_{2} + t \omega_3 + (-1-t) \omega_4$, & $\mu_{73}(t)=- \frac{3}{2} \omega_1 - \frac{1}{2} \omega_3 + t \omega_4 +(  -\frac{3}{2} - t ) \omega_5$,  \\ 
			$\mu_{26}(t)=-\frac{5}{2} \omega_{2} + t \omega_4 + (-1-t) \omega_5$, & 	$\mu_{74}(t)=- \frac{3}{2} \omega_2 + \frac{1}{2} \omega_3 + t \omega_4 +(  -\frac{5}{2} - t ) \omega_5$,  \\  
			$\mu_{27}(t)=- \frac{1}{2} \omega_{2} + t \omega_3 + (-\frac{3}{2}-t) \omega_4$, & 	$\mu_{75}(t)= \frac{1}{2} \omega_1 - \frac{3}{2} \omega_2 + t \omega_4 +(  -1 - t ) \omega_5$,  
			\\ 
			$\mu_{28}(t)=- \frac{3}{2} \omega_{2} + t \omega_4 + (-\frac{3}{2}-t) \omega_5$, & 	$\mu_{76}(t)=- \frac{1}{2} \omega_1 - \frac{3}{2} \omega_3 + t \omega_4 +(  -1 - t ) \omega_5$, 
			\\ 
			$\mu_{29}(t)=- \frac{5}{2} \omega_{3} + t \omega_4 + (-1-t) \omega_5$, & 	$\mu_{77}(t)=- \frac{1}{2} \omega_2 - \frac{1}{2} \omega_3 + t \omega_4 +(  -1 - t ) \omega_5$,  
			\\ 
			$\mu_{30}(t)= - \frac{3}{2} \omega_{3} + t \omega_4 + (-\frac{1}{2}-t) \omega_5$, & $\mu_{78}(t)= \frac{1}{2} \omega_1 - \frac{3}{2} \omega_2 + t \omega_3 +(  -1 - t ) \omega_4$, 
			\\ 
			$\mu_{31}(t)= t \omega_1 + (-1-t ) \omega_{2} - \frac{5}{2} \omega_5$, & $\mu_{79}(t)=t \omega_1 - \frac{1}{2} \omega_2 - \frac{1}{2} \omega_3  - \frac{1}{2} \omega_4 - \frac{1}{2} \omega_5$,  \\ 
			$\mu_{32}(t)=t \omega_2 + (-1-t ) \omega_{3} - \frac{5}{2} \omega_5$, & $\mu_{80}(t)= -\frac{1}{2} \omega_1-\frac{1}{2} \omega_2 - \frac{1}{2} \omega_3  - \frac{1}{2} \omega_4 +t \omega_5$   
			\\ 
			$\mu_{33}(t)=t \omega_3 + (-1-t ) \omega_{4} - \frac{5}{2} \omega_5$, & $\mu_{81}(t)=t \omega_1 + (-1-t) \omega_2 - \frac{3}{2} \omega_3  + \frac{1}{2} \omega_4 - \frac{3}{2} \omega_5$   
			\\
			$\mu_{34}(t)=t \omega_1 + (-\frac{5}{2}-t ) \omega_{2} - \frac{3}{2} \omega_5$, & $\mu_{82}(t)=t \omega_1 + (-\frac{3}{2}-t) \omega_2 - \frac{1}{2} \omega_3  - \frac{1}{2} \omega_4 - \frac{1}{2} \omega_5$, 
			\\ 
			$\mu_{35}(t)=t \omega_2 + (-\frac{5}{2}-t ) \omega_{3} - \frac{1}{2} \omega_5$, & 	$\mu_{83}(t)= - \frac{3}{2} \omega_1  + \frac{1}{2} \omega_2 - \frac{3}{2} \omega_3 + t \omega_4 + (-1-t) \omega_5$,  
			\\  
			$\mu_{36}(t)=t \omega_3 + (-\frac{5}{2}-t ) \omega_{4} + \frac{1}{2} \omega_5$, & 	$\mu_{84}(t)=- \frac{1}{2} \omega_1  - \frac{1}{2} \omega_2 - \frac{1}{2} \omega_3 + t \omega_4 + (-\frac{3}{2}-t) \omega_5$,  
			\\ 
			$\mu_{37}(t)=t \omega_1 + (-1-t ) \omega_{2} - \frac{5}{2} \omega_4$, & 	$\mu_{85}(t)= -\frac{3}{2} \omega_2 + t \omega_3 + (-1-t) \omega_4 - \frac{1}{2}\omega_5$, 
			\\ 
			$\mu_{38}(t)=t \omega_2 + (-1-t ) \omega_{3} - \frac{5}{2} \omega_4$, & 	$\mu_{86}(t)= -\frac{3}{2} \omega_1 + t \omega_3 + (-1-t) \omega_4 - \frac{3}{2}\omega_5$,  
			\\ 
			$\mu_{39}(t)= t \omega_1 + (-\frac{3}{2}-t ) \omega_{2} - \frac{3}{2} \omega_4$, & $\mu_{87}(t)= -\frac{3}{2} \omega_2 + t \omega_3 + (-\frac{1}{2}-t) \omega_4 - \frac{3}{2}\omega_5$, 
			\\ 
			$\mu_{40}(t)= t \omega_2 + (-\frac{3}{2}-t ) \omega_{3} - \frac{1}{2} \omega_4 $, & $\mu_{88}(t)=-\frac{1}{2} \omega_1 -\frac{1}{2} \omega_2 + t \omega_3 + (-1-t) \omega_4 - \frac{3}{2}\omega_5$,  
			\\ 
			$\mu_{41}(t)= t \omega_1 + (-1-t ) \omega_{2} - \frac{5}{2} \omega_3 $, & $\mu_{89}(t)= -\frac{3}{2} \omega_1 + t \omega_3 + (-\frac{3}{2}-t) \omega_4 - \frac{1}{2}\omega_5$,   
			\\ 
			$\mu_{42}(t)=t \omega_1 + (-\frac{1}{2}-t ) \omega_{2} - \frac{3}{2} \omega_3$, & $\mu_{90}(t)=-\frac{1}{2} \omega_1 -\frac{1}{2} \omega_2 + t \omega_3 + (-\frac{3}{2}-t) \omega_4 - \frac{1}{2}\omega_5$, 
			\\
			$\mu_{43}(t)= t \omega_1 + \frac{1}{2}\omega_2 -\frac{3}{2} \omega_3$, & $\mu_{91}(t) =-\frac{1}{2} \omega_1 + t \omega_2 + (-1-t) \omega_3 - \frac{3}{2}\omega_4$ ,
			\\  
			$\mu_{44}(t)= t \omega_1 - \frac{1}{2}\omega_2 -\frac{3}{2} \omega_4$, & $\mu_{92}(t) = -\frac{3}{2} \omega_1 + t \omega_2 + (-1-t) \omega_3 - \frac{3}{2}\omega_5$,
			\\  
			$\mu_{45}(t)= t \omega_1 - \frac{3}{2}\omega_2 -\frac{3}{2} \omega_5$, & $\mu_{93}(t) = -\frac{3}{2} \omega_1 + t \omega_2 + (-\frac{1}{2}-t) \omega_3 - \frac{3}{2}\omega_4$,
			\\  
			$\mu_{46}(t)= t \omega_1 - \frac{1}{2}\omega_3 -\frac{1}{2} \omega_4$, & $\mu_{94}(t) = -\frac{3}{2} \omega_1 + t \omega_2 + (-1-t) \omega_3 -  \frac{1}{2}\omega_4 - \frac{1}{2}\omega_5$,
			\\  
			$\mu_{47}(t)=  t \omega_1 - \frac{3}{2}\omega_3 -\frac{1}{2} \omega_5$, & $\mu_{95}(t) = -\frac{1}{2} \omega_1 + t \omega_2 + (-\frac{3}{2}-t) \omega_3 - \frac{3}{2}\omega_5$,
			\\  
			$\mu_{48}(t)=  t \omega_1 - \frac{3}{2}\omega_4 +\frac{1}{2} \omega_5$, & $\mu_{96}(t) = -\frac{1}{2} \omega_1 + t \omega_2 + (-\frac{3}{2}-t) \omega_3 -\frac{1}{2} \omega_4 - \frac{1}{2}\omega_5$.
		\end{longtable}
	\end{theorem}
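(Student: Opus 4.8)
The plan is to invoke the general correspondence of Proposition \ref{koro-polinomi}, which reduces the classification of irreducible $\widetilde{L}_{-7/2}(\g)$--modules in the category $\mathcal{O}$ to the purely algebraic problem of describing the common zero locus in $\h^*$ of the polynomials in $\mathcal{P}_0$. By Lemma \ref{lemma-polinomi} we have $\mathcal{P}_0 = \mathrm{span}_{\C}\{p_1,\dots,p_9\}$, so a weight $\mu = h_1\omega_1 + \cdots + h_5\omega_5$, with $h_i = \langle \mu, \alpha_i^\vee\rangle$, indexes an irreducible module in $\mathcal{O}$ if and only if $p_1(h) = \cdots = p_9(h) = 0$. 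Thus the theorem is equivalent to the assertion that the affine variety $V(p_1,\dots,p_9) \subset \C^5$ equals the union of the $96$ lines $\{\mu_i(t) : t \in \C\}$, and the proof amounts to solving this polynomial system.

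First I would exploit the factored presentation visible in Lemma \ref{lemma-polinomi}: each of $p_1,\dots,p_6$ is a product of two distinct coordinate functions $h_a h_b$ with a cubic, while each of $p_7,p_8,p_9$ is a product of a single $h_a$, the common linear factor $L := \tfrac{7}{2} + h_1 + \cdots + h_5$, and a quadratic. This turns the problem into a finite case analysis: for each equation one selects which factor vanishes, and each combined selection cuts out a linear or low-degree subsystem. The substance of the argument is to enumerate these selections, discard the inconsistent or over-determined ones (which contribute no new, or only lower-dimensional, components), and verify that the surviving consistent branches are exactly the one-parameter families $\mu_i(t)$. In practice the cleanest formulation is an irreducible (or primary) decomposition of the ideal $\langle p_1,\dots,p_9\rangle$ in $\C[h_1,\dots,h_5]$, carried out with computer algebra.

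To make this rigorous I would check the equality of varieties in two directions. For the inclusion of the listed lines, substitute each $\mu_i(t)$ into $p_1,\dots,p_9$ and confirm that every $p_j$ vanishes identically in $t$; the factored forms make this immediate, since along each line at least one factor of each $p_j$ is forced to vanish — typically a coordinate $h_a$ set to $0$, or the level factor $L$, as on $\mu_3(t)$ where $\tfrac{7}{2} + h_1 + h_2 = 0$. For the reverse inclusion one checks that the case analysis is exhaustive, that no additional component arises, and that the $96$ families are pairwise distinct and each genuinely one-dimensional.

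The main obstacle will be the combinatorial size of the problem: the number of factor-choices is large, and the cubic factors of $p_1,\dots,p_6$ interact nontrivially with the coordinate and level factors, so that several branches must be resolved simultaneously rather than independently. Guaranteeing completeness — that no solution line is omitted and that no spurious higher-dimensional component is introduced — is the delicate point, which is why a machine-assisted decomposition, cross-checked against the independent fact $\dim \mathcal{P}_0 = 9$ from Lemma \ref{lemma-polinomi}, is the safest route to the final list.
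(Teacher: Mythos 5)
Your proposal follows exactly the paper's route: Proposition \ref{koro-polinomi} together with Lemma \ref{lemma-polinomi} reduces the classification to the polynomial system $p_1(\mu(h))=\cdots=p_9(\mu(h))=0$, which the paper then solves with \textit{Mathematica}, precisely the machine-assisted decomposition you describe. Your additional remarks on the factored structure of the $p_i$ and the two-way verification of the variety equality are a reasonable elaboration of what that computation entails, but the argument is essentially identical to the paper's proof.
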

	
	\begin{proof}
		The Proposition \ref{koro-polinomi} and Lemma \ref{lemma-polinomi} imply that the highest weights $\mu\in \h^*$ of irreducible  $\widetilde{L}_{-7/2}(\g)$–modules $L_{-7/2}(\mu)$ in the category $\mathcal{O}$ are in one-to-one correspondence with solutions of the system of polynomial equations \begin{equation}\label{eq-sustav}
			p_1(\mu(h))= \cdots = p_9(\mu(h))=0.
		\end{equation}
		Using \textit{Mathematica}, we obtain that solutions of (\ref{eq-sustav}) are exactly weights defined by the set~(\ref{eq-hw-katO}). 
	\end{proof}

	
	

	\begin{corollary}\label{cor-KL}
		The set 
		$$ \{ L_{-7/2}(t \omega_1) \ | \ t \in \Z_{\geq 0} \} \cup  \{ L_{-7/2}(t \omega_5) \ | \ t \in \Z_{\geq 0} \} $$
		provides a complete list of irreducible $\widetilde{L}_{-7/2}(\g)$--modules in the category $KL_{-7/2}$.	
	\end{corollary}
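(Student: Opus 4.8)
The plan is to combine the full classification in the category $\mathcal{O}$ obtained in Theorem \ref{prop-Zhu-O} with the standard description of $KL_{-7/2}$ as a subcategory of $\mathcal{O}$. Recall that $KL_{-7/2}$ consists precisely of those modules in $\mathcal{O}$ on which $\g = \mathfrak{sl}_6$ (the zero-mode subalgebra) acts locally finitely; for an irreducible highest weight module $L_{-7/2}(\mu)$ this is equivalent to requiring that its top level be the \emph{finite-dimensional} irreducible $\g$--module $V(\mu)$, i.e. that $\mu$ be a dominant integral weight of $\mathfrak{sl}_6$. Since $KL_{-7/2}$ is a full subcategory of $\mathcal{O}$, every irreducible $\widetilde{L}_{-7/2}(\g)$--module in $KL_{-7/2}$ already appears in the list $\{ L_{-7/2}(\mu_i(t)) \}$ of Theorem \ref{prop-Zhu-O}. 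Thus the whole problem reduces to deciding, for each $i \in \{1,\ldots,96\}$, for which $t \in \C$ the weight $\mu_i(t)$ is dominant integral, i.e. all of its coordinates in the basis $\omega_1, \ldots, \omega_5$ lie in $\Z_{\geq 0}$.

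First I would dispose of all families carrying a fixed half-integer coefficient. Inspecting the table of Theorem \ref{prop-Zhu-O}, every $\mu_i(t)$ with $i \notin \{1,2,7,8,9,10\}$ has at least one coordinate equal to an odd multiple of $\tfrac{1}{2}$ (such as $\pm\tfrac{1}{2}$, $-\tfrac{3}{2}$, or $-\tfrac{5}{2}$), or of the form $-t-\tfrac{7}{2}$; such a coordinate can never be a nonnegative integer, so these families contribute no modules to $KL_{-7/2}$. The remaining families with purely integral coordinates are $\mu_1(t)=t\omega_1$, $\mu_2(t)=t\omega_5$, and $\mu_7(t),\ldots,\mu_{10}(t)$, which have the form $t\omega_j + (-t-1)\omega_{j+1}$ for $j=1,2,3,4$. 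For these last four families the two nonzero coordinates sum to $-1$, hence cannot both be nonnegative; they are therefore excluded as well.

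This leaves exactly $\mu_1(t)=t\omega_1$ and $\mu_2(t)=t\omega_5$, each of which is dominant integral precisely when $t \in \Z_{\geq 0}$. The corresponding modules $L_{-7/2}(t\omega_1)$ and $L_{-7/2}(t\omega_5)$ are genuine objects of $KL_{-7/2}$, since their top levels are finite-dimensional and the generalized Verma structure forces every conformal-weight space to be a finite-dimensional $\g$--module. This yields exactly the asserted set. The only real work is the bookkeeping across all $96$ families; the single nonroutine exclusion is that of $\mu_7,\ldots,\mu_{10}$, which are eliminated not by a half-integer entry but by the observation that $t$ and $-t-1$ cannot simultaneously be nonnegative integers.
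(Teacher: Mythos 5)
Your proposal is correct and follows essentially the same route as the paper: the corollary is stated there as an immediate consequence of Theorem \ref{prop-Zhu-O}, the implicit argument being exactly yours, namely that the irreducible objects of $KL_{-7/2}$ are the $L_{-7/2}(\mu)$ with $\mu$ dominant integral, so one simply scans the $96$ families for dominant integral weights. One tiny imprecision: a coordinate of the form $-t-\tfrac{7}{2}$ \emph{can} be a nonnegative integer (e.g.\ $t=-\tfrac{7}{2}$), so the families $\mu_3,\ldots,\mu_6$ should be excluded by the same pairing argument you use for $\mu_7,\ldots,\mu_{10}$ (the two variable coordinates sum to $-\tfrac{7}{2}\notin\Z$, hence cannot both be nonnegative integers).
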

	
	\begin{proposition}
		The singular vector $v$ from Theorem \ref{sing-v-sl6} generates the maximal ideal in $V^{-7/2}(\g)$, i.e. $L_{-7/2}(\g) = V^{-7/2}(\g) / \langle v \rangle$.
	\end{proposition}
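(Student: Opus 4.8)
The plan is to recognize that this statement is exactly the instance $n=2$ of Theorem \ref{main1}, applied to the concrete quotient $\mathcal V = \widetilde L_{-7/2}(\g)$, and to verify that both hypotheses of that theorem are satisfied. Since $\mathfrak{sl}_6 = \g_{n+1}$ and $-7/2 = k_{n+1}$ hold precisely when $n = 2$ (because $2(n+1) = 6$ and $k_3 = -\frac{7}{2}$), the vertex algebra $\widetilde L_{-7/2}(\g) = V^{-7/2}(\g)/\langle v\rangle$ is a quotient of $V^{k_{n+1}}(\g_{n+1})$ of the type required by Theorem \ref{main1}.

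First I would verify hypothesis (1). For $n=2$ we have $m = 2n = 4$, so $\omega_{m+1} = \omega_5$, and the required complete list of irreducible $\mathcal V$--modules in $KL_{k_{n+1}} = KL_{-7/2}$ is exactly
\[
\{\, L_{-7/2}(t\omega_1),\ L_{-7/2}(t\omega_5) \mid t \in \Z_{\ge 0}\,\}.
\]
This is precisely the content of Corollary \ref{cor-KL}, which is obtained from the classification of irreducible $\widetilde L_{-7/2}(\g)$--modules in the category $\mathcal O$ (Theorem \ref{prop-Zhu-O}) by retaining those highest weights $\mu_i(t)$ that are dominant integral for $\g = \mathfrak{sl}_6$, i.e.\ exactly those that give rise to modules in $KL_{-7/2}$.

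Next I would verify hypothesis (2). It demands that the maximal ideal of $V^{k_n}(\g_n) = V^{-5/2}(\mathfrak{sl}_4)$ be generated by a single singular vector $\Omega_n$ of conformal weight $4$. This is established in \cite[Section 5]{APV}, where the explicit conformal-weight-$4$ singular vector of $V^{-5/2}(\mathfrak{sl}_4)$ is written down and proved to generate the maximal ideal. With both hypotheses in place, Theorem \ref{main1} yields $\mathcal V = \widetilde L_{-7/2}(\g) \cong L_{-7/2}(\g)$.

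Finally, since $\widetilde L_{-7/2}(\g) = V^{-7/2}(\g)/\langle v\rangle$ by definition, this isomorphism forces $\langle v\rangle$ to coincide with the unique maximal ideal of $V^{-7/2}(\g)$; equivalently, $v$ generates the maximal ideal, which is the assertion. The substantive work lies entirely in the preceding sections --- computing the singular vector $v$ and its image $v'$ in $A(\widetilde L_{-7/2}(\g)) \cong \mathcal U(\g)/\langle v'\rangle$, extracting the polynomial family $\mathcal P_0 = \{p_1,\dots,p_9\}$, and solving the associated polynomial system to obtain the $96$ families of category-$\mathcal O$ highest weights, thereby establishing Corollary \ref{cor-KL}. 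Once that input is available, the present statement is an immediate application of Theorem \ref{main1} and presents no further obstacle; the only point to keep in mind is that Theorem \ref{main1} delivers simplicity of the quotient directly, so no separate argument ruling out a proper non-maximal $\langle v\rangle$ is needed.
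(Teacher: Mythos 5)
Your proposal is correct and follows exactly the paper's own argument: the proposition is the $n=2$ instance of Theorem \ref{main1}, with hypothesis (1) supplied by Corollary \ref{cor-KL} (via Theorem \ref{prop-Zhu-O}) and hypothesis (2) supplied by the result of \cite[Section 5]{APV} on $V^{-5/2}(\mathfrak{sl}_4)$. Your identification of the parameters ($n=2$, $m=4$, $k_{n+1}=-7/2$, $\omega_{m+1}=\omega_5$) and the concluding observation that simplicity of the quotient forces $\langle v\rangle$ to be the maximal ideal match the paper's reasoning precisely.
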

	
	\begin{proof} In \cite[Section 5]{APV} it was proved that the maximal ideal in $V^{-5/2}(\mathfrak{sl}_4)$ is generated by one singular vector of conformal weight four. The claim now follows from Theorem \ref{main1} and Corollary \ref{cor-KL}.
	\end{proof}
	
	\begin{remark}
		Since $\widetilde{L}_{-7/2}(\g)$ is simple, Theorem \ref{prop-Zhu-O} gives a classification of irreducible $L_{-7/2}(\g)$--modules in the category $\mathcal{O}$. A classification of  $L_{-7/2}(\g)$--modules in the category $KL_{-7/2}$ is known from \cite[Theorem 4.1]{ACPV-22}.
	\end{remark}
	
	\section{The case $V^{-9/2}(\mathfrak{sl}_8)$}\label{sl8}
	Throughout this section we denote $\g = \mathfrak{sl}_8$. We prove that in the case  $V^{-9/2}(\g)$ the maximal ideal is also generated by one singular vector of conformal weight four. Since both its explicit formula and polynomials spanning set $\mathcal{P}_0$ are very complicated, we briefly give here only a sketch of the proof.
	\begin{theorem}\label{sing-v-sl8}
		There is a singular vector $v$ in $V^{-9/2}(\g)$ of weight $-\frac{9}{2}\Lambda_0 - 4 \delta + \omega_2 + \omega_6$. Its explicit formula can be found in \textit{Mathematica} file \textit{\href{https://www.dropbox.com/scl/fi/upc8927uq761m8ea3bzpu/sl-8.pdf?rlkey=52r6wzsnl4i8ynowraehyy0wr&dl=0}{ sl(8).nb}}.
	\end{theorem}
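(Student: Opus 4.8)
The plan is to treat this as an enhanced version of Theorem~\ref{sing-v-sl6}: produce an explicit candidate vector $v$ of the prescribed weight and conformal grade, and then verify that it is annihilated by the Chevalley generators of $\hat{\n}_+$. Since $\g=\mathfrak{sl}_8$ has highest root $\theta=\epsilon_1-\epsilon_8$ with $e_{-\theta}=f_{1,8}$, the affine simple raising operators are $e_{i,i+1}(0)$ for $i=1,\ldots,7$ together with $e_0=f_{1,8}(1)$, and these generate $\hat{\n}_+$ as a Lie algebra. Hence a vector killed by all of them is automatically a singular vector for $\hat{\g}$, and the whole statement reduces to the finite set of linear checks $e_{i,i+1}(0)v=0$ $(i=1,\ldots,7)$ and $f_{1,8}(1)v=0$. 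This is exactly the verification carried out in the referenced \textit{Mathematica} file, and it is the rigorous core of the proof.

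To obtain $v$ in the first place I would fix a PBW basis of the grade-$4$ component of the weight space of $V^{-9/2}(\g)$ of $\mathfrak{sl}_8$-weight $\omega_2+\omega_6=\epsilon_1+\epsilon_2-\epsilon_7-\epsilon_8$; a general element of this space is a linear combination, with undetermined coefficients, of monomials $x_{j_1}(-n_1)\cdots x_{j_r}(-n_r)\mathbf 1$ with $\sum_i n_i=4$. Imposing the singular-vector relations above turns each relation into a homogeneous linear system in those coefficients, once the results are re-expressed in the same PBW basis using the affine commutation relations. Solving the combined system then yields the vector $v$. The guiding principle is the analogy $n=2,3\to n=4$: for $k_n=-\frac{2n+1}{2}$ the conformal weight-$4$ singular vector should carry $\mathfrak{sl}_{2n}$-weight $\omega_2+\omega_{2n-2}$, which for $n=4$ is precisely $\omega_2+\omega_6$. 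This pins down which weight space to search and gives strong a priori confidence (consistent with the expected structure of the maximal ideal and with Kac--Kazhdan type degeneration at this specific level) that the solution space is nonzero and one-dimensional, so that $v$ is determined up to scalar.

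The \emph{main obstacle} is computational scale rather than anything conceptual. Since $\dim\g=63$, the grade-$4$ weight space is of substantially higher dimension than in the $\mathfrak{sl}_6$ case, the number of PBW monomials to track grows accordingly, and the resulting explicit formula for $v$ has very many terms, which is why it is deferred to the \textit{Mathematica} file rather than printed. The practical care points are to normal-order consistently and to keep the level specialized to $k=-9/2$ throughout, since the existence of such a singular vector is a level-specific phenomenon: at generic level the corresponding linear system has only the trivial solution. Once the candidate $v$ is fixed, the verification of the eight annihilation conditions is again a routine but lengthy symbolic computation, completing the proof in the same manner as for $\mathfrak{sl}_6$.
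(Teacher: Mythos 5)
Your proposal is correct and matches the paper's proof: the paper likewise establishes the theorem by direct \textit{Mathematica} verification of the relations $e_{i,i+1}(0)v=0$ for $i=1,\ldots,7$ and $f_{1,8}(1)v=0$ for the explicit vector given in the linked file, which (as you note) suffices since these operators generate $\hat{\mathfrak{n}}_+$. Your additional discussion of how to produce $v$ by solving a linear system in a PBW basis is a reasonable account of the search procedure but is not part of the paper's stated argument.
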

	
	\begin{proof}
		Direct verification of relations $e_{i,i+1}(0)v=0$ for $i=1,\ldots,7$ and $f_{1,8}(1)v=0$ using \textit{Mathematica}.
	\end{proof}
	
	Let us denote by $\widetilde{L}_{-9/2}(\g) = V^{-9/2}(\g)/\langle v \rangle$ the associated quotient vertex algebra. Analogous to the case $V^{-7/2}(\mathfrak{sl}_6)$, we determine Zhu's algebra of $\widetilde{L}_{-9/2}(\g)$ and classify irreducible modules for $\widetilde{L}_{-9/2}(\g)$ in the categories $\mathcal{O}$ and $KL_{-9/2}$. In this case, a dimension of the set $\mathcal{P}_0$ defined in Subsection \ref{subsec-Zhu} is $20$ (cf. \cite{AP-08}). Explicit formulas for polynomials spanning $\mathcal{P}_0$ and highest weights of irreducible $\widetilde{L}_{-9/2}(\g)$--modules in the category $\mathcal{O}$ are given in \textit{Mathematica} file  \textit{\href{https://www.dropbox.com/scl/fi/upc8927uq761m8ea3bzpu/sl-8.pdf?rlkey=52r6wzsnl4i8ynowraehyy0wr&dl=0}{ sl(8).nb}}.  From above we have the assertion (1) of the following proposition.

	\begin{proposition} We have:
		\begin{itemize}
			\item[(1)] 	The set $$\{ L_{-9/2}(t \omega_1) \ | \ t \in \Z_{\geq 0} \} \cup \{  L_{-9/2}(t \omega_7) \ | \ t \in \Z_{\geq 0} \} $$
			provides a complete list of irreducible $\widetilde{L}_{-9/2}(\g)$--modules in the category $KL_{-9/2}$.
			\item[(2)] 	The singular vector $v$ from Theorem \ref{sing-v-sl8} generates the maximal ideal in $V^{-9/2}(\g)$, i.e. $L_{-9/2}(\g) = V^{-9/2}(\g) / \langle v \rangle$.
		\end{itemize}
	\end{proposition}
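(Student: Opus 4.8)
The plan is to follow verbatim the route used for $V^{-7/2}(\mathfrak{sl}_6)$ in Sections~\ref{sec-quotient}--\ref{sec-classif}, the only genuinely new content being the (much larger) computer algebra. For assertion (1) I would first pass to Zhu's algebra: starting from the singular vector $v$ of Theorem~\ref{sing-v-sl8}, compute its image $v'$ in $\mathcal{U}(\g)$, so that $A(\widetilde{L}_{-9/2}(\g))\cong \mathcal{U}(\g)/\langle v'\rangle$ by \cite[Proposition 1.4.2, Theorem 3.1.1]{FZ}. Next I would determine the polynomial set $\mathcal{P}_0$ from Subsection~\ref{subsec-Zhu}: let $R\subset\mathcal{U}(\g)$ be the submodule generated by $v'$ under the adjoint action. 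Since $v$ has weight $\omega_2+\omega_6$, one has $R\cong V(\omega_2+\omega_6)$, and by \cite[Lemma 5.2]{AP-08} its zero-weight space satisfies $\dim R_0=20$, whence $\dim\mathcal{P}_0\le 20$. Applying suitable monomials in the lowering operators to $v'$ and reading off the Cartan part, exactly as in Lemma~\ref{lemma-polinomi}, produces $20$ polynomials $p_1,\dots,p_{20}$; checking that they are linearly independent yields $\mathcal{P}_0=\mathrm{span}_{\C}\{p_1,\dots,p_{20}\}$.

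By Proposition~\ref{koro-polinomi}, the highest weights $\mu\in\h^*$ of the irreducible $\widetilde{L}_{-9/2}(\g)$--modules in $\mathcal{O}$ are precisely the solutions of the system $p_1(\mu)=\cdots=p_{20}(\mu)=0$; solving this in \emph{Mathematica} gives the finite family of weight-lines recorded in the accompanying file. To obtain (1) I then intersect this list with $KL_{-9/2}$: a module lies in $KL_{-9/2}$ exactly when its top is a finite-dimensional $\g$--module, i.e. when $\mu$ is dominant integral, and among all the lines the only weights that are dominant integral for every $t\in\Z_{\ge 0}$ are $t\omega_1$ and $t\omega_7$. This establishes assertion (1).

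For assertion (2) I would invoke Main Theorem~\ref{main1} with $n=3$, so that $\g_{n+1}=\mathfrak{sl}_8$, $k_{n+1}=-9/2$, $m+1=7$, and $\mathcal{V}=\widetilde{L}_{-9/2}(\g)$. Hypothesis (1) of Theorem~\ref{main1} is precisely assertion (1) just proved, while hypothesis (2)---that the maximal ideal of $V^{k_n}(\g_n)=V^{-7/2}(\mathfrak{sl}_6)$ is generated by a conformal-weight-$4$ singular vector---was established in Section~\ref{sec-classif}. Theorem~\ref{main1} then yields $\mathcal{V}\cong L_{-9/2}(\mathfrak{sl}_8)$, that is, $\langle v\rangle$ is the maximal ideal, which is assertion (2).

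The only real obstacle here is the sheer scale of the computation: the singular vector, its Zhu image $v'$, the $20$ spanning polynomials, and the resulting polynomial system are all dramatically larger than in the $\mathfrak{sl}_6$ case, which is why they are relegated to the \emph{Mathematica} file. The two delicate, non-mechanical points are verifying that the $p_i$ are genuinely linearly independent (so that $\dim\mathcal{P}_0=20$ and no solution branches are lost) and confirming that no additional dominant integral weight slips into $KL_{-9/2}$; once these are settled, everything else is a formal transcription of the $\mathfrak{sl}_6$ argument together with a direct application of Theorem~\ref{main1}.
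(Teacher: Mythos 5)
Your proposal is correct and follows essentially the same route as the paper: assertion (1) is obtained exactly as in the $\mathfrak{sl}_6$ case (Zhu's algebra, the $20$-dimensional space $\mathcal{P}_0$, solving the polynomial system by computer, then selecting the dominant integral weights), and assertion (2) is deduced by applying Theorem \ref{main1} with $n=3$, using the previously established fact that the maximal ideal of $V^{-7/2}(\mathfrak{sl}_6)$ is generated by a conformal-weight-$4$ singular vector. The paper's own proof is just a terser version of this same argument, with the large computations likewise relegated to the accompanying \emph{Mathematica} file.
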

	
	\begin{proof}
		The proof of assertion (2) follows from the first assertion and Theorem \ref{main1}.
		\end{proof}

	\section*{Appendix. Singular vector from Theorem \ref{sing-v-sl6}}
	An explicit formula for the singular vector $v$ from Theorem \ref{sing-v-sl6} is given by:
	\vspace*{-3mm}
	\begin{eqnarray*}
		&&v=e_{1,5}(-3) e_{2,6}(-1){\bf{1} } + \frac{5}{2} e_{1,5}(-2) e_{2,6}(-2){\bf{1} } 
		- e_{1,6}(-3) e_{2,5}(-1){\bf{1} } - \frac{5}{2} e_{1,6}(-2) e_{2,5}(-2){\bf{1} }\\
		&&- e_{2,5}(-3) e_{1,6}(-1) {\bf{1} } + e_{2,6}(-3) e_{1,5}(-1) {\bf{1} } -\frac{7}{5}e_{1,5}(-2) e_{1,6}(-1) f_{1,2}(-1) {\bf{1} }-\frac{5}{3}e_{1,5}(-2)e_{2,3}(-1)e_{3,6}(-1){\bf{1} }\\
		&&- \frac{5}{3} e_{1,5}(-2) e_{2,4}(-1) e_{4,6}(-1) {\bf{1} } - 
		\frac{5}{3} e_{1,5}(-2) e_{2,5}(-1) e_{5,6}(-1) {\bf{1} } - 
		e_{1,5}(-2) e_{2,6}(-1) h_2(-1) {\bf{1} } \\
		&&- \frac{1}{3} e_{1,5}(-2) e_{2,6}(-1) h_3(-1) {\bf{1} } + 
		\frac{1}{3}  e_{1,5}(-2) e_{2,6}(-1) h_4(-1) {\bf{1} } + 
		e_{1,5}(-2) e_{2,6}(-1) h_5(-1) {\bf{1} } \\
		&&- \frac{2}{5}  e_{1,5}(-1) e_{2,6}(-1) h_2(-2) {\bf{1} } + 
		\frac{2}{15} e_{1,5}(-1) e_{2,6}(-1) h_3(-2) {\bf{1} } - 
		\frac{2}{5} e_{1,5}(-1) e_{2,6}(-1) h_4(-2) {\bf{1} }\\
		&& + \frac{7}{5} e_{1,6}(-2) e_{1,5}(-1) f_{1,2}(-1) {\bf{1} } + 
		\frac{5}{3} e_{1,6}(-2) e_{2,3}(-1) e_{3,5}(-1) {\bf{1} } + 
		\frac{5}{3} e_{1,6}(-2) e_{2,4}(-1) e_{4,5}(-1)  {\bf{1} } \\
		&&+ e_{1,6}(-2) e_{2,5}(-1) h_2(-1) {\bf{1} } + \frac{1}{3} e_{1,6}(-2) e_{2,5}(-1) h_3(-1) {\bf{1} } - \frac{1}{3} e_{1,6}(-2) e_{2,5}(-1) h_4(-1) {\bf{1} }\\
		&&+ \frac{2}{3} e_{1,6}(-2) e_{2,5}(-1) h_5(-1) {\bf{1} } + \frac{5}{3} e_{1,6}(-2) e_{2,6}(-1) f_{5,6}(-1) {\bf{1} } + \frac{2}{5} e_{1,6}(-1) e_{2,5}(-1) h_2(-2) {\bf{1} } \\
		&&- \frac{2}{15}e_{1,6}(-1) e_{2,5}(-1) h_3(-2) {\bf{1} } + \frac{2}{5} e_{1,6}(-1) e_{2,5}(-1) h_4(-2) {\bf{1} } - \frac{2}{3} e_{2,3}(-2) e_{1,5}(-1) e_{3,6}(-1) {\bf{1} } \\
		&&+ \frac{2}{3} e_{2,3}(-2) e_{1,6}(-1) e_{3,5}(-1) {\bf{1} } - \frac{2}{3} e_{2,4}(-2) e_{1,5}(-1) e_{4,6}(-1) {\bf{1} } + \frac{2}{3} e_{2,4}(-2) e_{1,6}(-1) e_{4,5}(-1) {\bf{1} } \\
		&&+ e_{2,5}(-2) e_{1,2}(-1) e_{2,6}(-1) {\bf{1} } + e_{2,5}(-2) e_{1,3}(-1) e_{3,6}(-1) {\bf{1} } + e_{2,5}(-2) e_{1,4}(-1) e_{4,6}(-1) {\bf{1} } \\
		&&+ \frac{1}{3} e_{2,5}(-2) e_{1,5}(-1) e_{5,6}(-1) {\bf{1} } + e_{2,5}(-2) e_{1,6}(-1) h_1(-1) {\bf{1} } + e_{2,5}(-2) e_{1,6}(-1) h_2(-1) {\bf{1} } \\
		&&+ \frac{1}{3} e_{2,5}(-2) e_{1,6}(-1) h_3(-1) {\bf{1} } - 
		\frac{1}{3} e_{2,5}(-2) e_{1,6}(-1) h_4(-1) {\bf{1} } - 
		\frac{1}{3} e_{2,5}(-2) e_{1,6}(-1) h_5(-1) {\bf{1} } \\
		&&- e_{2,6}(-2) e_{1,2}(-1) e_{2,5}(-1) {\bf{1} } - e_{2,6}(-2) e_{1,3}(-1) e_{3,5}(-1) {\bf{1} } - e_{2,6}(-2) e_{1,4}(-1) e_{4,5}(-1) {\bf{1} } \\
		&&- e_{2,6}(-2) e_{1,5}(-1) h_1(-1) {\bf{1} } - e_{2,6}(-2) e_{1,5}(-1) h_2(-1) {\bf{1} } - \frac{1}{3} e_{2,6}(-2) e_{1,5}(-1) h_3(-1) {\bf{1} } \\
		&&+ \frac{1}{3} e_{2,6}(-2) e_{1,5}(-1) h_4(-1) {\bf{1} } - 
		\frac{1}{3} e_{2,6}(-2) e_{1,6}(-1) f_{5,6}(-1) {\bf{1}} - 
		\frac{2}{3} e_{3,5}(-2) e_{1,3}(-1) e_{2,6}(-1) {\bf{1}} \\
		&&+ 
		\frac{2}{3} e_{3,5}(-2) e_{1,6}(-1) e_{2,3}(-1) {\bf{1}} + 
		\frac{2}{3} e_{3,6}(-2) e_{1,3}(-1) e_{2,5}(-1) {\bf{1}} - 
		\frac{2}{3} e_{3,6}(-2) e_{1,5}(-1) e_{2,3}(-1) {\bf{1}} \\
		&&+ 
		\frac{2}{3} e_{4,5}(-2) e_{1,4}(-1) e_{2,6}(-1) {\bf{1}} - 
		\frac{2}{3} e_{4,5}(-2) e_{1,6}(-1) e_{2,4}(-1) {\bf{1}} - 
		\frac{2}{3} e_{4,6}(-2) e_{1,4}(-1) e_{2,5}(-1) {\bf{1}} \\
		&&+ 
		\frac{2}{3} e_{4,6}(-2) e_{1,5}(-1) e_{2,4}(-1) {\bf{1}} - 
		\frac{2}{5} e_{1,2}(-1) e_{1,5}(-1) e_{2,6}(-1) f_{1,2}(-1) {\bf{1}} \\
		&&+ 
		\frac{2}{5} e_{1,2}(-1) e_{1,6}(-1) e_{2,5}(-1) f_{1,2}(-1) {\bf{1}} + 
		\frac{2}{3} e_{1,2}(-1) e_{2,3}(-1) e_{2,5}(-1) e_{3,6}(-1) {\bf{1}} \\
		&&- 
		\frac{2}{3} e_{1,2}(-1) e_{2,3}(-1) e_{2,6}(-1) e_{3,5}(-1)  {\bf{1}} + 
		\frac{2}{3} e_{1,2}(-1) e_{2,4}(-1) e_{2,5}(-1) e_{4,6}(-1) {\bf{1}} \\
		&&- 
		\frac{2}{3} e_{1,2}(-1) e_{2,4}(-1) e_{2,6}(-1) e_{4,5}(-1) {\bf{1}} + 
		\frac{2}{3} e_{1,2}(-1) e_{2,5}(-1)^2 e_{5,6}(-1) {\bf{1}} \\
		&&-
		\frac{2}{3} e_{1,2}(-1) e_{2,5}(-1) e_{2,6}(-1) h_5(-1) {\bf{1}} - \frac{2}{3} e_{1,2}(-1) e_{2,6}(-1)^2 f_{5,6}(-1) {\bf{1}} \\
		&&+ 
		\frac{4}{15} e_{1,3}(-1) e_{1,5}(-1) e_{2,6}(-1) f_{1,3}(-1) {\bf{1}} - \frac{2}{3} e_{1,3}(-1)  e_{1,5}(-1) e_{2,6}(-1) f_{1,2}(-1){\bf{1}} \\
		&&- 
		\frac{4}{15} e_{1,3}(-1) e_{1,6}(-1) e_{2,5}(-1) f_{1,3}(-1) {\bf{1}} + \frac{2}{3}  e_{1,3}(-1) e_{1,6}(-1) e_{3,5}(-1) f_{1,2}(-1) {\bf{1}} \\
		&&+ 
		2 e_{1,3}(-1) e_{2,4}(-1) e_{3,5}(-1) e_{4,6}(-1) {\bf{1}} - 
		2 e_{1,3}(-1) e_{2,4}(-1) e_{3,6}(-1) e_{4,5}(-1) {\bf{1}} \\
		&&- 
		\frac{4}{3} e_{1,3}(-1) e_{2,5}(-1) e_{3,4}(-1) e_{4,6}(-1) {\bf{1}} + \frac{2}{3} e_{1,3}(-1) e_{2,5}(-1) e_{3,5}(-1) e_{5,6}(-1){\bf{1}} \\
		&&- 
		\frac{2}{3} e_{1,3}(-1) e_{2,5}(-1) e_{3,6}(-1) h_2(-1) {\bf{1}} - 
		\frac{2}{3} e_{1,3}(-1) e_{2,5}(-1) e_{3,6}(-1) h_3(-1) {\bf{1}} \\
		&&+ 
		\frac{2}{3} e_{1,3}(-1) e_{2,5}(-1) e_{3,6}(-1) h_4(-1) {\bf{1}} + 
		\frac{4}{3} e_{1,3}(-1) e_{2,6}(-1) e_{3,4}(-1) e_{4,5}(-1) {\bf{1}} \\
		&&+ 
		\frac{2}{3} e_{1,3}(-1) e_{2,6}(-1) e_{3,5}(-1) h_2(-1) {\bf{1}} + \frac{2}{3} e_{1,3}(-1) e_{2,6}(-1) e_{3,5}(-1) h_3(-1) {\bf{1}} \\
		&&- 
		\frac{2}{3} e_{1,3}(-1) e_{2,6}(-1) e_{3,5}(-1) h_4(-1) {\bf{1}} - \frac{2}{3} e_{1,3}(-1) e_{2,6}(-1) e_{3,5}(-1) h_5(-1) {\bf{1}} \\
		&&- 
		\frac{2}{3} e_{1,3}(-1) e_{2,6}(-1) e_{3,6}(-1) f_{5,6}(-1) {\bf{1}} + \frac{4}{15} e_{1,4}(-1) e_{1,5}(-1) e_{2,6}(-1) f_{1,4}(-1) {\bf{1}} \\
		&&- 
		\frac{2}{3} e_{1,4}(-1) e_{1,5}(-1) e_{4,6}(-1) f_{1,2}(-1) {\bf{1}} - \frac{4}{15} e_{1,4}(-1) e_{1,6}(-1) e_{2,5}(-1) f_{1,4}(-1) {\bf{1}}  \\
		&&+
		\frac{2}{3} e_{1,4}(-1) e_{1,6}(-1) e_{4,5}(-1) f_{1,2}(-1) {\bf{1}} - 
		2 e_{1,4}(-1) e_{2,3}(-1) e_{3,5}(-1) e_{4,6}(-1) {\bf{1}} \\
		&&+ 
		2 e_{1,4}(-1) e_{2,3}(-1) e_{3,6}(-1) e_{4,5}(-1) {\bf{1}} - 
		\frac{4}{3} e_{1,4}(-1) e_{2,5}(-1) e_{3,6}(-1) f_{3,4}(-1) {\bf{1}} \\
		&&+ 
		\frac{2}{3} e_{1,4}(-1) e_{2,5}(-1) e_{4,5}(-1) e_{5,6}(-1) {\bf{1}} - 
		\frac{2}{3} e_{1,4}(-1) e_{2,5}(-1) e_{4,6}(-1) h_2(-1) {\bf{1}} \\
		&&+
		\frac{2}{3} e_{1,4}(-1) e_{2,5}(-1) e_{4,6}(-1) h_3(-1) {\bf{1}} + \frac{2}{3} e_{1,4}(-1) e_{2,5}(-1) e_{4,6}(-1) h_4(-1) {\bf{1}} \\
		&&+ 
		\frac{4}{3} e_{1,4}(-1) e_{2,6}(-1) e_{3,5}(-1) f_{3,4}(-1) {\bf{1}}+ 
		\frac{2}{3} e_{1,4}(-1) e_{2,6}(-1) e_{4,5}(-1) h_2(-1) {\bf{1}} \\
		&&- 
		\frac{2}{3} e_{1,4}(-1) e_{2,6}(-1) e_{4,5}(-1) h_3(-1) {\bf{1}} - \frac{2}{3} e_{1,4}(-1) e_{2,6}(-1) e_{4,5}(-1) h_4(-1) {\bf{1}} \\
		&&- 
		\frac{2}{3} e_{1,4}(-1) e_{2,6}(-1) e_{4,5}(-1) h_5(-1) {\bf{1}} - \frac{2}{3} e_{1,4}(-1) e_{2,6}(-1) e_{4,6}(-1) f_{5,6}(-1) {\bf{1}} \\
		&&+ 
		\frac{4}{15} e_{1,5}(-1)^2  e_{2,6}(-1) f_{1,5}(-1) {\bf{1}} - 
		\frac{2}{3} e_{1,5}(-1)^2  e_{5,6}(-1) f_{1,2}(-1) {\bf{1}} \\
		&&- 
		\frac{4}{15} e_{1,5}(-1) e_{1,6}(-1) e_{2,5}(-1) f_{1,5}(-1) {\bf{1}} + 
		\frac{4}{15} e_{1,5}(-1) e_{1,6}(-1) e_{2,6}(-1) f_{1,6}(-1) {\bf{1}} \\
		&&+ \frac{2}{3} e_{1,5}(-1) e_{1,6}(-1) f_{1,2}(-1) h_5(-1) {\bf{1}} + \frac{4}{15} e_{1,5}(-1) e_{2,3}(-1) e_{2,6}(-1) f_{2,3}(-1) {\bf{1}} \\
		&&+ 
		\frac{4}{3} e_{1,5}(-1) e_{2,3}(-1) e_{3,4}(-1) e_{4,6}(-1) {\bf{1}} - \frac{2}{3} e_{1,5}(-1) e_{2,3}(-1) e_{3,5}(-1) e_{5,6}(-1) {\bf{1}} \\
		&&+ 
		\frac{2}{3} e_{1,5}(-1) e_{2,3}(-1) e_{3,6}(-1) h_1(-1) {\bf{1}} + 
		\frac{2}{3} e_{1,5}(-1) e_{2,3}(-1) e_{3,6}(-1) h_2(-1) {\bf{1}}\\
		&& + 
		\frac{2}{3} e_{1,5}(-1) e_{2,3}(-1) e_{3,6}(-1) h_3(-1) {\bf{1}} - 
		\frac{2}{3} e_{1,5}(-1) e_{2,3}(-1) e_{3,6}(-1) h_4(-1) {\bf{1}} \\
		&&+ 
		\frac{4}{15} e_{1,5}(-1) e_{2,4}(-1) e_{2,6}(-1) f_{2,4}(-1) {\bf{1}} 
		+ \frac{4}{3} e_{1,5}(-1) e_{2,4}(-1) e_{3,6}(-1) f_{3,4}(-1) {\bf{1}} \\
		&&
		- \frac{2}{3} e_{1,5}(-1) e_{2,4}(-1) e_{4,5}(-1) e_{5,6}(-1) {\bf{1}} 
		+ \frac{2}{3} e_{1,5}(-1) e_{2,4}(-1) e_{4,6}(-1) h_1(-1) {\bf{1}} \\
		&&
		+ \frac{2}{3} e_{1,5}(-1) e_{2,4}(-1) e_{4,6}(-1) h_2(-1) {\bf{1}}
		- \frac{2}{3} e_{1,5}(-1) e_{2,4}(-1) e_{4,6}(-1) h_3(-1) {\bf{1}} \\
		&&
		- \frac{2}{3} e_{1,5}(-1) e_{2,4}(-1) e_{4,6}(-1) h_4(-1) {\bf{1}} 
		+ \frac{4}{15} e_{1,5}(-1) e_{2,5}(-1) e_{2,6}(-1) f_{2,5}(-1) {\bf{1}} \\
		&&
		+ \frac{2}{3} e_{1,5}(-1) e_{2,5}(-1) e_{5,6}(-1) h_1(-1) {\bf{1}} 
		+ \frac{4}{15} e_{1,5}(-1) e_{2,6}(-1) e_{2,6}(-1) f_{2,6}(-1) {\bf{1}} \\
		&&
		- \frac{16}{15} e_{1,5}(-1) e_{2,6}(-1) e_{3,4}(-1) f_{3,4}(-1) {\bf{1}} 
		+ \frac{4}{15} e_{1,5}(-1) e_{2,6}(-1) e_{3,5}(-1) f_{3,5}(-1) {\bf{1}} \\
		&&
		+ \frac{4}{15} e_{1,5}(-1) e_{2,6}(-1) e_{3,6}(-1) f_{3,6}(-1) {\bf{1}} 
		+ \frac{4}{15} e_{1,5}(-1) e_{2,6}(-1) e_{4,5}(-1) f_{4,5}(-1) {\bf{1}} \\
		&&
		+ \frac{4}{15} e_{1,5}(-1) e_{2,6}(-1) e_{4,6}(-1) f_{4,6}(-1) {\bf{1}} 
		- \frac{2}{5}  e_{1,5}(-1) e_{2,6}(-1) e_{5,6}(-1) f_{5,6}(-1) {\bf{1}} \\
		&&
		+ \frac{2}{5} e_{1,5}(-1) e_{2,6}(-1) h_1(-1) h_2(-1) {\bf{1}} + 
		\frac{2}{15} e_{1,5}(-1) e_{2,6}(-1) h_1(-1) h_3(-1) {\bf{1}} \\
		&&- \frac{2}{15} e_{1,5}(-1) e_{2,6}(-1) h_1(-1) h_4(-1) {\bf{1}} - 
		\frac{2}{5} e_{1,5}(-1) e_{2,6}(-1) h_1(-1) h_5(-1) {\bf{1}} \\
		&&
		+ \frac{2}{5} e_{1,5}(-1) e_{2,6}(-1) h_2(-1) h_2(-1) {\bf{1}} 
		+ \frac{4}{15} e_{1,5}(-1) e_{2,6}(-1) h_2(-1) h_3(-1) {\bf{1}} \\
		&&
		- \frac{4}{15} e_{1,5}(-1) e_{2,6}(-1) h_2(-1) h_4(-1) {\bf{1}} 
		- \frac{2}{15} e_{1,5}(-1) e_{2,6}(-1) h_2(-1) h_5(-1) {\bf{1}} \\
		&&
		- \frac{2}{15} e_{1,5}(-1) e_{2,6}(-1) h_3(-1) h_3(-1) {\bf{1}} 
		+ \frac{4}{15} e_{1,5}(-1) e_{2,6}(-1) h_3(-1) h_4(-1) {\bf{1}} \\
		&&
		+ \frac{2}{15} e_{1,5}(-1) e_{2,6}(-1) h_3(-1) h_5(-1) {\bf{1}} 
		+ \frac{2}{5} e_{1,5}(-1) e_{2,6}(-1) h_4(-1) h_4(-1) {\bf{1}} \\
		&&
		+ \frac{2}{5} e_{1,5}(-1) e_{2,6}(-1) h_4(-1) h_5(-1) {\bf{1}} 
		- \frac{4}{15} e_{1,6}(-1)^2 e_{2,5}(-1) f_{1,6}(-1) {\bf{1}} \\
		&&
		+ \frac{2}{3} e_{1,6}(-1)^2 f_{1,2}(-1) f_{5,6}(-1) {\bf{1}} 
		- \frac{4}{15} e_{1,6}(-1) e_{2,3}(-1) e_{2,5}(-1) f_{2,3}(-1) {\bf{1}} \\
		&&
		-\frac{4}{3} e_{1,6}(-1) e_{2,3}(-1)  e_{3,4}(-1) e_{4,5}(-1) {\bf{1}} - 
		\frac{2}{3} e_{1,6}(-1) e_{2,3}(-1) e_{3,5}(-1) h_1(-1) {\bf{1}} \\
		&&
		- \frac{2}{3} e_{1,6}(-1) e_{2,3}(-1) e_{3,5}(-1) h_2(-1)  {\bf{1}} 
		- \frac{2}{3} e_{1,6}(-1) e_{2,3}(-1) e_{3,5}(-1) h_3(-1)  {\bf{1}} \\
		&&
		+ \frac{2}{3} e_{1,6}(-1) e_{2,3}(-1) e_{3,5}(-1) h_4(-1)  {\bf{1}} 
		+ \frac{2}{3} e_{1,6}(-1) e_{2,3}(-1) e_{3,5}(-1) h_5(-1)  {\bf{1}} \\
		&&
		+ \frac{2}{3} e_{1,6}(-1) e_{2,3}(-1) e_{3,6}(-1) f_{5,6}(-1) {\bf{1}} 
		- \frac{4}{15} e_{1,6}(-1) e_{2,4}(-1) e_{2,5}(-1) f_{2,4}(-1) {\bf{1}} \\
		&&
		- 
		\frac{4}{3} e_{1,6}(-1) e_{2,4}(-1) e_{3,5}(-1) f_{3,4}(-1) {\bf{1}} 
		- \frac{2}{3} e_{1,6}(-1) e_{2,4}(-1) e_{4,5}(-1) h_1(-1) {\bf{1}} \\
		&&
		- \frac{2}{3} e_{1,6}(-1) e_{2,4}(-1) e_{4,5}(-1) h_2(-1) {\bf{1}} 
		+ \frac{2}{3} e_{1,6}(-1) e_{2,4}(-1) e_{4,5}(-1) h_3(-1) {\bf{1}} \\
		&&
		+ \frac{2}{3} e_{1,6}(-1) e_{2,4}(-1) e_{4,5}(-1) h_4(-1) {\bf{1}} 
		+ \frac{2}{3} e_{1,6}(-1) e_{2,4}(-1) e_{4,5}(-1) h_5(-1) {\bf{1}} \\
		&&
		+ \frac{2}{3} e_{1,6}(-1) e_{2,4}(-1) e_{4,6}(-1) f_{5,6}(-1) {\bf{1}} 
		- \frac{4}{15} e_{1,6}(-1) e_{2,5}(-1)^2  f_{2,5}(-1) {\bf{1}} \\
		&&
		- \frac{4}{15} e_{1,6}(-1)  e_{2,5}(-1) e_{2,6}(-1) f_{2,6}(-1){\bf{1}}
		+ \frac{16}{15} e_{1,6}(-1) e_{2,5}(-1) e_{3,4}(-1) f_{3,4}(-1) {\bf{1}}\\
		&& 
		- \frac{4}{15} e_{1,6}(-1) e_{2,5}(-1) e_{3,5}(-1) f_{3,5}(-1){\bf{1}} 
		- \frac{4}{15} e_{1,6}(-1) e_{2,5}(-1) e_{3,6}(-1) f_{3,6}(-1) {\bf{1}} \\
		&&
		- \frac{4}{15} e_{1,6}(-1) e_{2,5}(-1) e_{4,5}(-1) f_{4,5}(-1) {\bf{1}} 
		- \frac{4}{15} e_{1,6}(-1) e_{2,5}(-1) e_{4,6}(-1) f_{4,6}(-1) {\bf{1}} \\
		&&
		+ \frac{2}{5} e_{1,6}(-1) e_{2,5}(-1)e_{5,6}(-1) f_{6,6}(-1) {\bf{1}} 
		- \frac{2}{5}  e_{1,6}(-1) e_{2,5}(-1) h_1(-1) h_2(-1) {\bf{1}} \\
		&&
		- \frac{2}{15} e_{1,6}(-1) e_{2,5}(-1) h_1(-1) h_3(-1) {\bf{1}}
		+ \frac{2}{15} e_{1,6}(-1) e_{2,5}(-1) h_1(-1) h_4(-1) {\bf{1}} \\
		&&
		- \frac{4}{15} e_{1,6}(-1) e_{2,5}(-1) h_1(-1) h_5(-1) {\bf{1}} 
		- \frac{2}{5}  e_{1,6}(-1) e_{2,5}(-1) h_2(-1) h_2(-1) {\bf{1}}\\
		&&
		- \frac{4}{15} e_{1,6}(-1) e_{2,5}(-1) h_2(-1) h_3(-1) {\bf{1}} 
		+ \frac{4}{15} e_{1,6}(-1) e_{2,5}(-1) h_2(-1) h_4(-1) {\bf{1}} \\
		&&
		+ \frac{2}{15} e_{1,6}(-1) e_{2,5}(-1) h_2(-1) h_5(-1) {\bf{1}} 
		+ \frac{2}{15} e_{1,6}(-1) e_{2,5}(-1) h_3(-1) h_3(-1) {\bf{1}} \\
		&&
		- \frac{4}{15} e_{1,6}(-1) e_{2,5}(-1) h_3(-1) h_4(-1) {\bf{1}} 
		- \frac{2}{15} e_{1,6}(-1) e_{2,5}(-1) h_3(-1) h_5(-1) {\bf{1}} \\
		&&
		- \frac{2}{5} e_{1,6}(-1) e_{2,5}(-1) h_4(-1) h_4(-1) {\bf{1}} 
		- \frac{2}{5} e_{1,6}(-1) e_{2,5}(-1) h_4(-1) h_5(-1) {\bf{1}} \\
		&&
		- \frac{2}{3} e_{1,6}(-1) e_{2,6}(-1) f_{5,6}(-1) h_1(-1) {\bf{1}}.
	\end{eqnarray*}


\begin{thebibliography}{ABKS}
		\bibitem{A-94}D. Adamovi\'{c}  Some rational vertex algebras, Glasnik Matemati\v cki 29(49) (1994), 25--40.
		
		\bibitem{ACPV-22}D. Adamovi\'{c}, T. Creutzig, O. Per\v{s}e, I. Vukorepa, Tensor category $KL_k(\mathfrak{sl}_{2n})$ via minimal affine W-algebras at the non-admissible level $k=-\frac{2n+1}{2}$,  Journal of Pure and Applied Algebra, Volume 228, Issue 5, May 2024, 107565,  arXiv:2212.00704 [math.QA]
		
		\bibitem{AKMPP-18}D. Adamovi\'{c}, V. G. Kac, P. Moseneder Frajria, P. Papi, O. Per\v{s}e, Conformal embeddings of affine vertex algebras in minimal W-algebras I: Structural results, J. Algebra 500 (2018), 117–152.
		
		\bibitem{AKMPP-20}D. Adamovi\'{c}, V. G. Kac, P. Moseneder  Frajria, P. Papi and O. Per\v{s}e, An application of collapsing levels to the representation theory of affine vertex algebras, Int. Math. Res. Not. IMRN 13 (2020), 4103–4143.
		
		\bibitem{AM}D. Adamovi\'{c}, A. Milas, Vertex operator algebras associated to modular invariant representations for $A_1^{(1)}$, Math. Res. Lett. 2 (1995), 563–575.
		
		\bibitem{AM-sheets} T. Arakawa and A. Moreau, Sheets and associated varieties of affine vertex algebras, Adv. Math. 320 (2017), 157–209.
		
		\bibitem{AMP}  D. Adamović, P. Moseneder Frajria, P. Papi, New approaches for studying conformal embeddings and collapsing levels for W--algebras, International Mathematics Research Notices, Volume 2023, Issue 22, November 2023, Pages 19431-19475, rnad138.
		
		\bibitem{AP-08}D. Adamovi\'{c}, O. Per\v{s}e, Representations of certain non-rational vertex operator algebras of affine type, J. Algebra 319 (2008), 2434–2450.
		
		
		\bibitem{APV}D. Adamovi\'{c}, O. Per\v{s}e, I. Vukorepa, On the representation theory of the vertex algebra $L_{-5/2}(sl(4))$, Commun. Contemp. Math. (2021), 2150104, 42 pp.
		
		\bibitem{Ar05}T. Arakawa, Representation theory of superconformal algebras and the Kac-Roan-Wakimoto conjecture, Duke Math. J. 130 (2005), 435–478.
		
		\bibitem{ArMo-sheets}T. Arakawa and A. Moreau, Sheets and associated varieties of affine vertex algebras, Adv. Math. 320 (2017), 157–209.
		
		\bibitem{Bo}R. Borcherds, Vertex algebras, Kac-Moody algebras, and the Monster, Proc. Nat. Acad. Sci. U.S.A. 83 (1986), 3068–3071.
		
		\bibitem{FZ}I. Frenkel and Y.-C. Zhu, Vertex operator algebras associated to representations of affine and Virasoro algebras,
		Duke Math. J. 66 (1992), 123–168.
		
		\bibitem{K-inf}V. Kac, Infinite dimensional Lie algebras, 3rd ed., Cambridge University Press, Cambridge (1990), xxii+400 pp.
		
		\bibitem{K98}V. Kac, Vertex Algebras for Beginners, University Lecture Series, 2nd ed., AMS, Providence, RI (1998), vi+201 pp.
		
		\bibitem{KW88}V. Kac and M. Wakimoto, Modular invariant representations of infinite dimensional Lie algebras and superalgebras, Proc. Natl. Acad. Sci. USA 85 (1988), 4956–4960.
		
		\bibitem{KW04}V. Kac, M. Wakimoto, Quantum reduction and representation theory of superconformal algebras, Adv. Math. 185 (2004), 400–458.
		
		\bibitem{P1} O. Per\v{s}e, Vertex operator algebras associated to type B affine Lie algebras on admissible half-integer levels, J. Algebra 307 (2007), 215–248.
		
		\bibitem{P2} O. Per\v{s}e, Vertex operator algebras associated to certain admissible modules for affine Lie algebras of type A, Glas.
		Mat. Ser. III 43(63) (2008), 41–57.
		
		\bibitem{Zhu96}Y.-C. Zhu, Modular invariance of characters of vertex operator algebras, J. Amer. Math. Soc. 9 (1996), 237–302.
		
	\end{thebibliography}
\end{document}